\title{Normal triangulations in o-minimal structures}
\author{El\'ias Baro \thanks{Partially supported by
GEOR MTM2005-02568}\\
Departamento de Matem\'aticas\\ Universidad Aut\'onoma de Madrid\\ 28049 Madrid, Spain}
\newtheorem{deff}{Definition}[section]
\newtheorem{teo}[deff]{Theorem}
\newtheorem{teoICVC}{TISSP-Triangulation Theorem}
\newtheorem{teoNT}{NT-Triangulation Theorem}
\newtheorem{lema}[deff]{Lemma}
\newtheorem{cor}[deff]{Corollary}
\newtheorem{prop}[deff]{Proposition}
\theoremstyle{definition}
\newtheorem{nota}[deff]{Notation}
\newtheorem{obs}[deff]{Observation}
\newtheorem{ejem}[deff]{Example}
\newcommand{\IC}{TIP}
\newcommand{\VC}{SSP}
\begin{document}
\maketitle
\begin{abstract}
We work over an o-minimal expansion of a real closed field $R$. Given a closed simplicial complex $K$ and some definable subsets $S_1,\ldots, S_k$ of its realization $|K|$ in $R$ we prove that there exists a triangulation $(K',\phi')$ of $|K|$ compatible with $S_1,\ldots,S_k$ such that $K'$ is a subdivision of $K$ and $\phi'$ is definably homotopic to $id_{|K|}$.
\end{abstract}

%%%%%%%%%%%%%%%%%%%%%%%%%%%
          %
          %
          %
          %
          %
%%%%%%%%%%%%%%%%%%%%%%%%%%%  
\section{Introduction}
We work over an o-minimal expansion of an ordered field $R$. In the study of definable sets, the triangulation theorem is many times applied to find a triangulation of a definable set  compatible with some definable subsets,  $S_1,\ldots,S_k$ say, where the set is already the realization in $R$  of a simplicial complex $K$. In this particular case, it is natural to ask if we can find such a triangulation using $K$, that is, if there exists a subdivision $K'$ of $K$ --so that  the realizations $|K'|$ and $|K|$ coincide-- and a definable homeomorphism $\phi':|K'|\rightarrow |K|$ such that $(K',\phi')$ is a triangulation of $|K|$ compatible with $S_1,\ldots,S_k$.

In this paper we find such subdivision $K'$ and definable homeomorphism $\phi'$.  In fact, we find such a subdivision and $\phi'$ --we call it normal triangulation (see Definition \ref{def:normal})-- with $\phi'$ definably homotopic to $id_{|K|}$ (see Corollary \ref{cor:principal}). To do this we use a refinement of the proof of the Triangulation theorem in \cite{MR1633348} and a result of  o-minimal homology theory. In order to prove the existence of normal triangulations we also consider triangulations with some other special properties. For instance, we say that a triangulation $(K,\phi)$ of a closed and bounded definable set $X$ has the triangulation independence property (\IC) if, roughly, the image by $\phi'$ of the vertices of $K$ span a copy of $K$ (see Definition \ref{def:IC}). We will prove that for every closed and bounded definable set $X$ and some definable subsets $S_1,\ldots,S_k$ of $X$ there exists a triangulation of $X$ compatible with $S_1,\ldots,S_k$ having TIP (see Theorem \ref{tissptoerema}).

As a first application of the existence of normal triangulations, we will prove in \cite{miopreprint} that the obvious map between the  semialgebraic homotopy set and the o-minimal homotopy set of two semialgebraic sets is a bijection. This will allow us to transfer the whole semialgebraic homotopy theory developed by H.Delfs and M.Knebusch in \cite{MR819737} to the o-minimal setting.

In section \ref{section:2} we introduce the main concepts of our study, as normal triangulations and \IC,  we give some examples and we state the main results. In section \ref{section:3} we prove the existence  and basic properties of triangulations satisfying \IC. In section \ref{section:4} and \ref{section:5} we respectively show  the existence and properties of normal triangulations. For basic results on o-minimality we refer to \cite{MR1633348}.

The results of this paper are part of the author's  Thesis.
%%%%%%%%%%%%%%%%%%%%%%%%%%%%%%
        %%%%%%
              %
             %
            %
           %
          %
          %%%%%%
%%%%%%%%%%%%%%%%%%%%%%%%%%%%%%
\section{Definitions and examples}\label{section:2}

Let $\mathcal{R}$ be an o-minimal expansion of a field $R$. By ''definable'' we mean definable in $\mathcal{R}$. All functions are assumed to be continuous.  In this section we are going to introduce triangulations with some special properties. Firstly, we fix some notation.
\begin{nota}Recall the definition of a simplicial complex in 8.1.5 in \cite{MR1633348}, where the simplices are the smallest convex subsets whose closure contains certain affinely independent points in $R^m$. That is, we will consider the simplicial complexes as the realizations of abstracts complexes whose simplices are open. Given a definable set $S$ and some definable subsets $S_1,\ldots,S_k$ of $S$ we say that $(K,\phi)$
is a triangulation of $S$ compatible with $S_1,\ldots,S_k$, denoted by $(K,\phi)\in \Delta(S;S_1,\ldots,S_k)$, if $\phi:|K|\rightarrow S$ is a definable homeomorphism and $S_i$ is the union of the images of simplices of $K$ by $\phi$. Given $C=\phi(\sigma)$,
$\sigma=(v_{0},\ldots,v_{n}) \in K$, we say that $v$ is a vertex
of $C$ if $C=\phi(v_{i})$ for some $i=0,\ldots,n$. If $S$ is a definable set then
$\partial S= \overline{S}-S$ denotes its frontier and 
$Bd(S)=\overline{S}-int(S)$ its boundary.
\end{nota}

\begin{deff}\label{def:normal}Let $K$ be a closed simplicial complex in $R^{m}$ and $S_{1},\ldots,S_{k}$ definable subsets of $|K|$. A triangulation $(K',\phi')\in \Delta(|K|;S_{1},\ldots,S_{k})$ is a \textbf{normal triangulation of the complex $K$ compatible with $S_{1},\ldots,S_{k}$}, denoted by NT-triangulation or by $(K',\phi')\in \Delta^{NT}(|K|;S_{1},\ldots,S_{k})$, if
\begin{enumerate}
\item[i)] $(K',\phi')\in \Delta(|K|;S_{1},\ldots,S_{k},\sigma)_{\sigma\in K}$, 

\item[ii)] $(K',id)\in \Delta(|K|;\sigma)_{\sigma\in K}$, and

\item[iii)] for every $\tau \in K'$ and $\sigma \in K$ such that $\tau \subset \sigma$ we have that $\phi'(\tau)\subset \sigma$.

\end{enumerate}
If $k=0$ we say that $(K',\phi')$ is a
NT-triangulation of the complex $K$.
\end{deff}
\begin{obs}Following the notation of the definition of the NT-triangulation we observe that given empty subsets  $S_{1},\ldots, S_{k}$, that is, $k=0$, NT-triangulations are easy to obtain because any subdivision $(K',id)$ of $K$ is a NT-triangulation of $K$. We are mostly interested in NT-triangulations when the
 subsets $S_{1},\ldots,S_{k}$ are not empty.
\end{obs}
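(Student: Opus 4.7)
The plan is to unpack Definition \ref{def:normal} in the particular case $k=0$ and verify each of the three clauses directly for the pair $(K',id)$, where $K'$ is an arbitrary subdivision of $K$. With no subsets $S_i$ present, clause i) collapses to the statement $(K',\phi')\in \Delta(|K|;\sigma)_{\sigma\in K}$, which is formally identical to clause ii), so the only substantial things to check are ii) and iii).

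To verify ii), I would invoke the defining properties of a subdivision: by assumption $|K'|=|K|$ and every simplex $\sigma\in K$ is expressible as a union of (open) simplices of $K'$. Thus the identity map $id:|K'|\to|K|$ is tautologically a definable homeomorphism, and it sends simplices of $K'$ to simplices of $K'$, whose union makes up each $\sigma\in K$. This is exactly the requirement to belong to $\Delta(|K|;\sigma)_{\sigma\in K}$, so both i) and ii) hold simultaneously.

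For iii), the calculation is immediate: given $\tau\in K'$ and $\sigma\in K$ with $\tau\subset\sigma$, one has $\phi'(\tau)=id(\tau)=\tau\subset\sigma$. Hence all three conditions are fulfilled and $(K',id)\in\Delta^{NT}(|K|)$.

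There is no genuine obstacle in this argument; the role of the observation is really to make explicit that, in the absence of any definable subsets to triangulate, the definition of NT-triangulation reduces to a subdivision and the condition $\phi'=id$ is automatically admissible. The point is to contrast this with the substantive case $k\geq 1$, where the need to make $\phi'$ send each $\tau\in K'$ into the simplex $\sigma\in K$ containing it, while simultaneously triangulating the $S_i$, is precisely what makes the \emph{NT-Triangulation Theorem} nontrivial.
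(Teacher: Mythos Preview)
Your verification is correct and is precisely the direct check the paper has in mind; the paper itself offers no argument beyond the word ``observe,'' so your unpacking of Definition~\ref{def:normal} with $\phi'=id$ is exactly the intended (trivial) justification.
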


In order to prove the existence of the NT-triangulations we need to introduce triangulations with two properties.

\begin{deff}\label{def:IC}Let $(K,\phi)\in \Delta(S)$, where $S$ is a closed and bounded definable set in $R^{m}$. We say that
$(K,\phi)$ satisfies the \textbf{triangulation independence property (\IC)} if
\begin{enumerate}
\item[i)]for every $n$-simplex $\tau=(v_{0},\ldots,v_{n})\in K$ we have that \linebreak $\phi(v_{0}),\ldots,\phi(v_{n})\in R^{m}$ are affinely independent, that is, they span an $n$-simplex $\tau^{\phi}:=(\phi(v_{0}),\ldots,\phi(v_{n}))$ in $R^{m}$, and

\item[ii)]if $\tau_{1}$ and $\tau_{2}$ are different simplices of 
$K$ then $\tau_{1}^{\phi}$ and $\tau_{2}^{\phi}$ are disjoint.
\end{enumerate}
\end{deff}

\begin{deff}Let $S$ be a closed and bounded definable set and $S_{1},\ldots,S_{k}$ definable subsets of $S$. A triangulation $(K,\phi)\in \Delta(S;S_{1},\ldots,S_{k})$  satisfies \textbf{the small simplices property
 with respect to} $S_{1},\ldots,S_{k}$, denoted by \VC$(S_{1},\ldots,S_{k})$, if for every $\tau=(v_{0},\ldots,v_{n})\in K$ with
$\phi(v_{0}),\ldots,\phi(v_{n})\in \overline{S_{j}}$ we have that
$\phi(\tau)\subset \overline{S_{j}}$.
\end{deff}
\begin{ejem}All the examples are in dimension $2$.

$1)$ The following is  an example of a triangulation
$(K,\phi)$ of a closed and bounded definable set $S$ such that it does not satisfy \IC\ because $i)$ fails.
\begin{center}
 \setlength{\unitlength}{1cm}
\begin{picture}(10,5)
%izquierda el complejo K
\put(0.5,2){\line(1,0){2}}

\put(0.5,2){\line(1,1){1}}

\put(0.5,2){\line(1,-1){1}}

\put(1.5,3){\line(1,-1){1}}

\put(1.5,1){\line(1,1){1}}

\put(1.5,4){\makebox(0,0){$K$}}

\put(0.5,2){\circle*{0.1}} \put(1.5,1){\circle*{0.1}}
\put(1.5,3){\circle*{0.1}} \put(2.5,2){\circle*{0.1}}

\put(0.2,1.75){\makebox(0,0){$v_{0}$}}
\put(2.7,1.75){\makebox(0,0){$v_{2}$}}
\put(1.5,3.3){\makebox(0,0){$v_{1}$}}
\put(1.5,0.7){\makebox(0,0){$v_{3}$}}
%derecha el conjunto S
\put(7.5,1){\line(1,2){1}}

\put(7.5,1){\line(1,0){2}}

\put(8.5,3){\line(1,-2){1}}

\qbezier(7.5,1)(8.5,2)(9.5,1)

\put(8.5,4){\makebox(0,0){$S$}}

\put(7.5,1){\circle*{0.1}} \put(8.5,3){\circle*{0.1}}
\put(9.5,1){\circle*{0.1}} \put(8.5,1){\circle*{0.1}}

\put(7.2,0.7){\makebox(0,0){$\phi(v_{0})$}}
\put(8.5,0.7){\makebox(0,0){$\phi(v_{3})$}}
\put(8.5,3.3){\makebox(0,0){$\phi(v_{1})$}}
\put(9.8,0.7){\makebox(0,0){$\phi(v_{2})$}}

%la flechita del medio
\put(5,2.5){\makebox(0,0){$\phi$}}

\put(4,2){\vector(1,0){2}}
\end{picture}
\end{center}
Observe that if we denote by $S_{1}=\phi((v_0,v_2))$ then $(K,\phi)$ has \VC($S_{1}$).

$2)$ The following is  an example of a triangulation $(K,\phi)$ of a closed and bounded definable set $S$ without \IC\ because it satisfies  $i)$ but not $ii)$.
\begin{center}
\setlength{\unitlength}{0.9cm}
\begin{picture}(11,4.5)
%izquierda el complejo K
\put(0,2){\line(1,0){2}}

\put(3,0.5){\line(0,1){3}}

\put(1.5,4.5){\makebox(0,0){$K$}}

\put(0,2){\circle*{0.1}} \put(2,2){\circle*{0.1}}
\put(3,0.5){\circle*{0.1}} \put(3,3.5){\circle*{0.1}}

\put(0,1.5){\makebox(0,0){$v_{0}$}}
\put(2,1.5){\makebox(0,0){$v_{1}$}}
\put(2.5,3.5){\makebox(0,0){$v_{2}$}}
\put(2.5,0.5){\makebox(0,0){$v_{3}$}}
%derecha el conjunto S
%\put(11,2){\oval(3,3)[r]}
\qbezier(9,3.5)(13,2)(9,0.5)

\put(8,2){\line(1,0){2}}

\put(9,4.5){\makebox(0,0){$S$}}

\put(8,2){\circle*{0.1}} \put(10,2){\circle*{0.1}}
\put(9,0.5){\circle*{0.1}} \put(9,3.5){\circle*{0.1}}

\put(8,1.5){\makebox(0,0){$\phi(v_{0})$}}
\put(10,1.5){\makebox(0,0){$\phi(v_{1})$}}
\put(8.25,3.5){\makebox(0,0){$\phi(v_{2})$}}
\put(8.25,0.5){\makebox(0,0){$\phi(v_{3})$}}
%la flechita del medio
\put(5.5,2.5){\makebox(0,0){$\phi$}} \put(4.5,2){\vector(1,0){2}}
\end{picture}
\end{center}
3) The following example shows that we cannot deduce $iii)$ of \linebreak NT-triangulation from $i)$ and $ii)$.
\begin{center}
\setlength{\unitlength}{1cm}
\begin{picture}(10,3.2)
%izquierda el complejo K
\put(0.5,0){\line(0,1){2}}

\put(0.5,0){\line(1,1){1}}

\put(0.5,2){\line(1,-1){1}}

\put(1.5,1){\line(1,1){1}}

\put(1.5,1){\line(1,-1){1}}

\put(2.5,0){\line(0,1){2}}

\put(1.5,3){\makebox(0,0){$K'$}}

\put(0.5,0){\circle*{0.1}} \put(0.5,2){\circle*{0.1}}
\put(1.5,1){\circle*{0.1}} \put(2.5,0){\circle*{0.1}}
\put(2.5,2){\circle*{0.1}} 
\put(0,0){\makebox(0,0){$v_{1}$}}
\put(0,2){\makebox(0,0){$v_{2}$}}
\put(1.5,0.6){\makebox(0,0){$v_{5}$}}
\put(2.8,0){\makebox(0,0){$v_{3}$}}
\put(2.8,2){\makebox(0,0){$v_{4}$}}

%derecha el conjunto S
\put(7.5,0){\line(0,1){2}}

\put(7.5,0){\line(1,1){1}}

\put(7.5,2){\line(1,-1){1}}

\put(8.5,1){\line(1,1){1}}

\put(8.5,1){\line(1,-1){1}}

\put(9.5,0){\line(0,1){2}}

\put(8.5,3){\makebox(0,0){$K$}}

\put(7.5,0){\circle*{0.1}} \put(7.5,2){\circle*{0.1}}
\put(8.5,1){\circle*{0.1}} \put(9.5,0){\circle*{0.1}}
\put(9.5,2){\circle*{0.1}} 

\put(6.8,0){\makebox(0,0){$\phi(v_{3})$}}
\put(6.8,2){\makebox(0,0){$\phi(v_{4})$}}
\put(8.5,0.4){\makebox(0,0){$\phi(v_{5})$}}
\put(10.1,0){\makebox(0,0){$\phi(v_{1})$}}
\put(10.1,2){\makebox(0,0){$\phi(v_{2})$}}

%la flechita del medio
\put(5,1.5){\makebox(0,0){$\phi$}}

\put(4,1){\vector(1,0){2}}
\end{picture}
\end{center}
where $K'=K$ and $\phi$ is a symmetry.

\end{ejem}
\begin{obs}\label{obs:icprimerasub}The condition \VC\ is always easy to obtain taking the first barycentric subdivision. The problem is that we are interested in proving the existence of both properties (\IC\ and \VC) at the same time. In general it is not true that if $K$ has \IC\ then its first barycentric subdivision has \IC, as we can see in the following example,
\\
\begin{center}
\setlength{\unitlength}{0.9cm}
\begin{picture}(13,5)
%izquierda el complejo K
\put(0.5,2){\line(1,2){1}}

\put(0.5,2){\line(1,0){2}}

\put(2.5,2){\line(-1,2){1}}

\put(1.5,5){\makebox(0,0){$K$}}

\put(0.5,2){\circle*{0.1}} \put(2.5,2){\circle*{0.1}}
\put(1.5,4){\circle*{0.1}} \put(1.5,3){\circle*{0.1}}

\put(1.5,2.8){\makebox(0,0){$w$}}
\put(0.2,1.6){\makebox(0,0){$v_{0}$}}
\put(2.7,1.6){\makebox(0,0){$v_{1}$}}
\put(1.5,4.3){\makebox(0,0){$v_{2}$}}

%derecha el conjunto S
\put(7.5,3){\line(1,0){2}}

\qbezier(9.5,3)(10.5,4)(12.5,2)

\qbezier(7.5,3)(10.5,6)(12.5,2)

\put(10.5,5){\makebox(0,0){$S$}}

\put(7.5,3){\circle*{0.1}} \put(12.5,2){\circle*{0.1}}
\put(9.5,3){\circle*{0.1}} \put(11.6,3){\circle*{0.1}}

\put(12.4,3.3){\makebox(0,0){$\phi(w)$}}
\put(7.2,2.6){\makebox(0,0){$\phi(v_{0})$}}
\put(9.7,2.6){\makebox(0,0){$\phi(v_{1})$}}
\put(12.5,1.6){\makebox(0,0){$\phi(v_{2})$}}

%la flechita del medio
\put(5,3.5){\makebox(0,0){$\phi=id$}}

\put(4,3){\vector(1,0){2}}
\end{picture}
\end{center}
\end{obs}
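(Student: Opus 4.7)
The claim in Observation \ref{obs:icprimerasub} that deserves justification is the first sentence: for any $(K,\phi)\in\Delta(S;S_1,\ldots,S_k)$ with $S$ closed and bounded, the first barycentric subdivision $K'=\mathrm{sd}(K)$ yields a triangulation $(K',\phi)\in\Delta(S;S_1,\ldots,S_k)$ satisfying \VC$(S_1,\ldots,S_k)$. The second sentence is already settled by the picture that follows.

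The strategy is to exploit the combinatorial description of the barycentric subdivision. Every simplex $\tau'\in K'$ is of the form $(b(\sigma_0),\ldots,b(\sigma_n))$, where $b(\sigma_i)$ is the barycentre and $\sigma_0<\sigma_1<\cdots<\sigma_n$ is a strictly increasing chain of simplices of $K$ under the face relation. The key geometric observation I would use is that $\tau'\subseteq\sigma_n$, so $\phi(\tau')\subseteq\phi(\sigma_n)$.

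For each $j\in\{1,\ldots,k\}$ I would consider the subcollection $K_j:=\{\sigma\in K:\phi(\sigma)\subseteq\overline{S_j}\}$ and verify that it is a closed subcomplex of $K$ with $\phi(|K_j|)=\overline{S_j}$; this uses that $\phi$ is a homeomorphism between closed bounded sets and that $(K,\phi)$ is compatible with $S_j$ in the sense of the Notation. Now suppose the vertices of some $\tau'\in K'$ satisfy $\phi(b(\sigma_i))\in\overline{S_j}$ for all $i$. Then each $b(\sigma_i)$ lies in $|K_j|$, and because $b(\sigma_i)$ lies in the open simplex $\sigma_i$ while the open simplices of $K$ partition $|K|$ (and $|K_j|$ is the union of the open simplices in $K_j$), one concludes $\sigma_i\in K_j$ for every $i$. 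Hence $\sigma_n\in K_j$ and $\phi(\tau')\subseteq\phi(\sigma_n)\subseteq\overline{S_j}$, which is exactly \VC$(S_1,\ldots,S_k)$.

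I expect no real obstacle: the proof rests only on the combinatorics of $\mathrm{sd}(K)$ and on the identification $\overline{S_j}=\phi(|K_j|)$, both routine. The value of the observation lies elsewhere, namely in the accompanying picture which exhibits an \IC\ triangulation whose first barycentric subdivision no longer has \IC. That is what rules out the easy approach ``\IC\ first, then barycentric subdivide for \VC'' and motivates the more delicate simultaneous construction carried out in Section \ref{section:3}.
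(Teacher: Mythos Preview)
The paper offers no proof of this observation: the first sentence is asserted as ``always easy'' and the second is left to the picture. Your argument for the first sentence is correct and is exactly the standard one the author has in mind; the chain description of simplices in $\mathrm{sd}(K)$, the inclusion $\tau'\subset\sigma_n$, and the fact that $K_j=\{\sigma\in K:\phi(\sigma)\subset\overline{S_j}\}$ is a closed subcomplex with $\phi(|K_j|)=\overline{S_j}$ (which is Lemma~\ref{lema:triangclausu}) are precisely the ingredients needed. There is nothing further to compare.
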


In section \ref{section:3} we will prove the existence of triangulations with \IC\ and \VC.

\begin{teoICVC}Let $S\subset R^{m}$ be a closed and bounded definable set and let
$S_{1},\ldots,S_{k}$ be definable subsets of $S$. Then there exists a triangulation $(K,\phi)\in \Delta(S;S_{1},\ldots,S_{k})$ with both \IC\ and \VC$(S_{1},\ldots,S_{k})$.
\end{teoICVC}

We will need the TISSP-triangulation theorem to prove the existence 
 of the NT-triangulations in section \ref{section:4}.
\begin{teoNT}\label{teo:triangulacion}Let $K$ be a closed simplicial complex and let $S_{1},\ldots,S_{k}$ be definable subsets of $|K|$. Then there exist a triangulation $(K',\phi')\in \Delta^{NT}(|K|;S_{1},\ldots,S_{k})$.
\end{teoNT}

We will use an induction argument to prove the existence of a triangulation with \IC. In the induction step we will need the existence of \VC. In this way we will obtain the TISSP-triangulation theorem in which triangulations with both properties (\IC\ and \VC) are proved to exists.
%%%%%%%%%%%%%%%%%%%%%%%%%%
        %%%%%%%%
               %
             %% 
               %
        %%%%%%%%
%%%%%%%%%%%%%%%%%%%%%%%%%%
\section{\IC\ and \VC\ properties}\label{section:3}
In this section we will prove the TISSP-triangulation theorem. First we prove two lemmas.
\begin{lema}\label{lema:triangclausu}Let $S$ be a definable subset, $S_{1},\ldots,S_{k}$ definable subsets of $S$ and $(K,\phi)\in \Delta(S;S_{1},\ldots,S_{k})$. Then $(K,\phi)\in \Delta(S;cl_S(S_{i}))_{i=1,\ldots,k}$, so that $(K,\phi)\in \Delta(S;\partial_S S_{i})_{i=1,\ldots,k}$.
\end{lema}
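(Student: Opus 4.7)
The plan is to push the whole question back to the complex $|K|$ via the homeomorphism $\phi$. Set $A_i:=\phi^{-1}(S_i)\subset|K|$; by hypothesis each $A_i$ is a union of (open) simplices of $K$, and since $\phi$ is a homeomorphism, $\phi(cl_{|K|}(A_i))=cl_S(S_i)$ and $\phi(Bd_{|K|}(A_i))$ relates to $\partial_S S_i$ in the obvious way. So the lemma reduces to the purely combinatorial claim that the closure in $|K|$ of a union of simplices of $K$ is again such a union.

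To establish this claim, fix a simplex $\sigma\in K$ with $\sigma\subset A_i$. By the face-closure axiom in the definition of a simplicial complex (8.1.5 in \cite{MR1633348}), every open face of $\sigma$ lies in $K$, so $\overline{\sigma}\cap|K|$ is a union of simplices of $K$ and is closed in $|K|$. The finite union $B_i:=\bigcup_{\sigma\in K,\,\sigma\subset A_i}(\overline{\sigma}\cap|K|)$ is therefore closed in $|K|$, contains $A_i$, and is contained in $cl_{|K|}(A_i)$; by minimality of the closure, $B_i=cl_{|K|}(A_i)$. In particular $cl_{|K|}(A_i)$ is a union of simplices of $K$, which shows $(K,\phi)\in\Delta(S;cl_S(S_{i}))_{i=1,\ldots,k}$.

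For the frontier part, write $\partial_S S_i=cl_S(S_i)\setminus S_i$. Both $S_i$ and $cl_S(S_i)$ are unions of images $\phi(\tau)$ of simplices $\tau\in K$, and these images are pairwise disjoint because $\phi$ is a bijection and the simplices of $K$ are. Thus for each $\tau\in K$ one has either $\phi(\tau)\subset S_i$ or $\phi(\tau)\cap S_i=\emptyset$, and similarly for $cl_S(S_i)$; consequently the set difference is the union of those $\phi(\tau)$ with $\phi(\tau)\subset cl_S(S_i)\setminus S_i$, giving $(K,\phi)\in\Delta(S;\partial_S S_{i})_{i=1,\ldots,k}$.

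There is no serious obstacle here: the only delicate point is identifying $cl_{|K|}(A_i)$ with a union of simplices of $K$, and for this one only needs the face-closure property of simplicial complexes together with the finiteness of $K$. The frontier statement is then a purely bookkeeping consequence of the disjointness of the open simplices.
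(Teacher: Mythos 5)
Your argument is correct and takes essentially the same route as the paper: the paper also transfers the question to the complex via $\phi$, writing $cl_S(S_{i})=\phi(cl_{|K|}(\sigma_{1}))\cup\cdots\cup\phi(cl_{|K|}(\sigma_{l}))=\dot{\bigcup}_{\tau\in\mathcal{F}}\phi(\tau)$ with $\mathcal{F}$ the simplices $\sigma_{j}$ together with their faces, and the frontier statement is the same disjointness bookkeeping. One minor caveat: Definition 8.1.5 in van den Dries does not by itself require a complex to contain the faces of its simplices (that is what a closed complex is), but your key step survives for arbitrary complexes since any $\tau\in K$ meeting $\overline{\sigma}$ must be a common face and hence lie in $\overline{\sigma}$, so $\overline{\sigma}\cap|K|$ is still a closed-in-$|K|$ union of simplices of $K$, which is exactly what the paper implicitly uses.
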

\begin{proof}Let $\sigma_{1},\ldots,\sigma_{l}\in K$ be such that $S_{i}=\phi(\sigma_{1}) \dot{\cup} \cdots \dot{\cup} \phi(\sigma_{l})$. Then 
$$\begin{array}{lcl}
cl_S(S_{i}) & = & cl_S(\phi(\sigma_{1})) \cup \cdots \cup cl_S(\phi(\sigma_{l}))=\\
& = & \phi(cl_{|K|}(\sigma_{1}))\cup \cdots \cup \phi(cl_{|K|}(\sigma_{l}))=\\�
& = & \dot{\bigcup}_{\tau\in\mathcal{F}}\phi(\tau),
\end{array}$$
where $\mathcal{F}$ is the collection of simplices $\{\sigma_{1},\ldots,\sigma_{l}\}$ and all their faces.
\end{proof}
\begin{lema}\label{lema:VCcomobar}Let $S$ be a closed and bounded definable set and let $S_{1},\ldots,S_{k}$ be definable subsets of $S$. Let $(K,\phi)\in \Delta(S;S_{1},\ldots,S_{k})$ and let $(\tilde{K},\tilde{\phi})\in \Delta(S;\phi(\sigma))_{\sigma\in K}$ with \VC$(\partial\phi(\sigma):\sigma\in K)$. Then, 
\begin{itemize}
\item[a)]if $\tau \in \tilde{K}$ and $\sigma\in K$ are such that $\tilde{\phi}(\tau)\subset \phi(\sigma)$ then there exist a vertex $v\in Vert(\tau)$ with $\tilde{\phi}(v)\in \phi(\sigma)$, and

\item[b)]$(\tilde{K},\tilde{\phi})\in \Delta(S;S_{1},\ldots,S_{k})$ and satisfies \VC$(S_{1},\ldots,S_{k})$.
\end{itemize}
\end{lema}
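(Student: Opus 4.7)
The plan is to prove (a) first and then derive (b) essentially for free, using also Lemma \ref{lema:triangclausu}. The key observation driving everything is that the \VC\ hypothesis has been formulated precisely for the frontier sets $\partial\phi(\sigma)$, which is exactly what is needed to rule out a simplex of $\tilde{K}$ sitting inside the open simplex $\phi(\sigma)$ while having all of its vertices pushed onto the frontier $\partial\phi(\sigma)$.

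For part (a), I would argue by contradiction. Assume $\tilde{\phi}(\tau)\subset \phi(\sigma)$ but $\tilde{\phi}(v)\notin \phi(\sigma)$ for every vertex $v$ of $\tau$. Compatibility of $(\tilde{K},\tilde{\phi})$ with the family $\{\phi(\sigma')\}_{\sigma'\in K}$ places each $\tilde{\phi}(v_i)$ into a unique open simplex $\phi(\sigma_i')$. Since $v_i\in\overline{\tau}$ and $\tilde{\phi}$ is a homeomorphism, $\tilde{\phi}(v_i)\in\overline{\phi(\sigma)}=\phi(\overline{\sigma})$, forcing $\sigma_i'$ to be a face of $\sigma$; the contradiction assumption makes it a proper face, so $\tilde{\phi}(v_i)\in \partial\phi(\sigma)$. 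Since $\partial\phi(\sigma)$ is a finite union of images of proper faces of $\sigma$, it is closed in $S$, and Lemma \ref{lema:triangclausu} yields $(\tilde{K},\tilde{\phi})\in \Delta(S;\partial\phi(\sigma))$. The hypothesis \VC$(\partial\phi(\sigma):\sigma\in K)$ then gives $\tilde{\phi}(\tau)\subset \overline{\partial\phi(\sigma)}=\partial\phi(\sigma)$, which is disjoint from $\phi(\sigma)$, contradicting $\tilde{\phi}(\tau)\subset \phi(\sigma)$.

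For part (b), the compatibility $(\tilde{K},\tilde{\phi})\in \Delta(S;S_1,\ldots,S_k)$ is immediate: each $S_j$ is a disjoint union $\phi(\sigma_1)\dot{\cup}\cdots\dot{\cup}\phi(\sigma_l)$ of open simplices, and each $\phi(\sigma_i)$ is itself a union of images $\tilde{\phi}(\tau)$ by hypothesis. For \VC$(S_1,\ldots,S_k)$, take $\tau\in \tilde{K}$ with every $\tilde{\phi}(v_i)\in \overline{S_j}$ and let $\sigma\in K$ be the unique simplex with $\tilde{\phi}(\tau)\subset \phi(\sigma)$. Part (a) provides a vertex $v_i$ of $\tau$ with $\tilde{\phi}(v_i)\in \phi(\sigma)\cap\overline{S_j}$. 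Since $\overline{S_j}=\bigcup_r\phi(\overline{\sigma_r})$ decomposes as a union of open simplices $\phi(\sigma')$ with $\sigma'$ a face of some $\sigma_r$, and distinct open simplices of $K$ have disjoint images, this intersection forces $\phi(\sigma)$ itself to be one of these pieces; hence $\phi(\sigma)\subset \overline{S_j}$ and therefore $\tilde{\phi}(\tau)\subset \overline{S_j}$.

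I expect the only real subtlety to be in part (a): verifying that $\partial\phi(\sigma)$ is closed (so that $\overline{\partial\phi(\sigma)}=\partial\phi(\sigma)$) and that $(\tilde{K},\tilde{\phi})$ is automatically compatible with it via Lemma \ref{lema:triangclausu}, which is what lets us invoke the \VC\ hypothesis. Beyond that, the argument reduces to the standard fact that in a triangulation distinct open simplices have disjoint images.
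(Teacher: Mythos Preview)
Your proposal is correct and follows essentially the same route as the paper: part (a) is proved by contradiction using that $\partial\phi(\sigma)$ is closed and invoking \VC$(\partial\phi(\sigma))$, and part (b) is derived from (a) together with the compatibility of $(K,\phi)$ with $\overline{S_j}$ (Lemma~\ref{lema:triangclausu}). The paper's version is terser (it handles the vertex case for $\sigma$ separately and skips the explicit face analysis you give), but the logical skeleton is identical.
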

\begin{proof}Firstly we observe that by Lemma \ref{lema:triangclausu} the triangulation $(\tilde{K},\tilde{\phi})$ is compatible with the subsets $\partial \phi(\sigma)$, $\sigma \in K$.
\begin{enumerate}
\item[a)]Let $\tau \in \tilde{K}$ and $\sigma \in K$. Suppose that $\tilde{\phi}(\tau)\subset \phi(\sigma)$. If $\sigma=\{v \}$, $v\in Vert(K)$, then it is obvious. Assume that $\sigma$ is not a vertex and that $\tilde{\phi}(v)\notin \phi(\sigma)$, for any $v\in Vert(\tau)$.
Hence, as $\overline{\partial \phi(\sigma)}=\partial \phi(\sigma)$, by \VC($\partial\phi(\sigma):\sigma\in K$), $\tilde{\phi}(\tau)\subset \partial \phi(\sigma)$, a contradiction.

\item[b)]That $(\tilde{K},\tilde{\phi})\in \Delta(S;S_{1},\ldots,S_{k})$ is clear. Let us show that $(\tilde{K},\tilde{\phi})$ has \VC($S_{1},\ldots,S_{k}$). So let $\tau=(v_{0},\dots,v_{n})\in \tilde{K}$ be such that $\tilde{\phi}(v_{i})\in \overline{S_j}$, $i=0,\ldots,n$. Let $\sigma\in K$ be such that $\tilde{\phi}(\tau)\subset \phi(\sigma)$. By $a)$ there exists a vertex $v_{i_{0}}\in Vert(\tau)$ with $\tilde{\phi}(v_{i_{0}})\in \phi(\sigma)$. By Lemma \ref{lema:triangclausu} $(K,\phi)$ is compatible with $\overline{S_{j}}$ and hence $\phi(\sigma)\subset \overline{S_{j}}$. Therefore $\tilde{\phi}(\tau)\subset \overline{S_{j}}$.
\end{enumerate}
\end{proof}
\begin{obs}Following the notation of Lemma \ref{lema:VCcomobar}, if $\tau\in \tilde{K}$ and $\sigma\in K$, $\sigma$ not a vertex, with $\tilde{\phi}(\tau)\subset \phi(\sigma)$ then $\tilde{\phi}(\tau)\neq \phi(\sigma)$.
\end{obs}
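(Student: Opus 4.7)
The approach is a short proof by contradiction that feeds directly on part (a) of Lemma \ref{lema:VCcomobar} and on the injectivity of $\tilde{\phi}$. Assume, aiming at a contradiction, that $\tilde{\phi}(\tau)=\phi(\sigma)$.

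The first step is a quick dimension check. Since $\sigma$ is not a vertex of $K$, the set $\phi(\sigma)$ contains more than one point, so under the contradiction hypothesis $\tilde{\phi}(\tau)$ contains more than one point as well; hence $\tau$ cannot be a $0$-simplex of $\tilde{K}$ either. Because the paper's convention is that simplices are open, every vertex of the positive-dimensional open simplex $\tau$ belongs to $\overline{\tau}\setminus\tau$, not to $\tau$ itself.

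Now I would invoke part (a) of Lemma \ref{lema:VCcomobar} for the pair $(\tau,\sigma)$, whose hypothesis $\tilde{\phi}(\tau)\subset\phi(\sigma)$ is given: it yields a vertex $v\in Vert(\tau)$ with $\tilde{\phi}(v)\in\phi(\sigma)$. Combining the previous paragraph's observation $v\notin\tau$ with the bijectivity of $\tilde{\phi}$ forces $\tilde{\phi}(v)\notin\tilde{\phi}(\tau)=\phi(\sigma)$, contradicting $\tilde{\phi}(v)\in\phi(\sigma)$.

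I do not expect any real obstacle, since the whole content is to use the vertex supplied by (a) as a witness against equality. The only point worth flagging is that the hypothesis ``$\sigma$ not a vertex'' is essential: it is precisely what drives the dimension step promoting $\tau$ to positive dimension, so that vertices of $\tau$ genuinely live in $\overline{\tau}\setminus\tau$ rather than in $\tau$; without it, for a vertex $\sigma=\{w\}$ one would have $\tau=\{w\}$ and $\tilde{\phi}(\tau)=\phi(\sigma)$.
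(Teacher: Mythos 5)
Your proof is correct and follows the argument the paper clearly intends (the observation is stated without proof right after Lemma \ref{lema:VCcomobar}): apply part a) to get a vertex $v$ of $\tau$ with $\tilde{\phi}(v)\in\phi(\sigma)$, and note that since $\tau$ must have positive dimension, its vertices lie in $\overline{\tau}\setminus\tau$, so injectivity of $\tilde{\phi}$ rules out $\tilde{\phi}(\tau)=\phi(\sigma)$. Your remark on why the hypothesis that $\sigma$ is not a vertex is needed is also accurate.
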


The proof of the TISSP-triangulation theorem is a refinement of the proof of the Triangulation theorem, 8.2.9 in \cite{MR1633348}. We will make extensively use of the following notions and results from Chapter 8 of \cite{MR1633348}. We have include them here since the notation is slightly different.

\begin{lema}\label{lema:usolevant}Let $(a_0,\ldots,a_n)$ be a $n$-simplex in $R^{m}$ and $r_j,s_j\in R$, $r_j\leqslant s_j$, for $j=0,\ldots,n$ and $r_j< s_j$ for some $j$. Consider $b_j=(a_j,r_j)$, $c_j=(a_j,s_j)\in R^{m+1}$. Then $(b_0,\ldots,b_j,c_j,\ldots,c_n)$ is a $(n+1)$-simplex provided $b_j \neq c_j$. In fact, the collection of $(n+1)$-simplices $(b_0,\ldots,b_j,c_j,\ldots,c_n)$, with $b_j\neq c_j$, and all their faces is a closed simplicial complex.
\end{lema}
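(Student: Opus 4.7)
The plan is to verify the two claims of the lemma separately, each by a barycentric/projection computation that leverages the affine independence of $a_0,\ldots,a_n$ in $R^m$.

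For the affine independence of $b_0,\ldots,b_j,c_j,\ldots,c_n$ when $r_j<s_j$, I would suppose an alleged affine dependence $\sum_{i=0}^{j}\alpha_i b_i+\sum_{i=j}^{n}\beta_i c_i=0$ with $\sum_i\alpha_i+\sum_i\beta_i=0$ and project onto the first $m$ coordinates. This gives
\begin{equation*}
\sum_{i<j}\alpha_i a_i+(\alpha_j+\beta_j)a_j+\sum_{i>j}\beta_i a_i=0
\end{equation*}
as an affine combination with total coefficient $0$, so affine independence of $a_0,\ldots,a_n$ forces $\alpha_i=0$ for $i<j$, $\beta_i=0$ for $i>j$, and $\alpha_j+\beta_j=0$. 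The equation in the last coordinate then reads $\alpha_j r_j+\beta_j s_j=\beta_j(s_j-r_j)=0$, and $r_j<s_j$ yields $\alpha_j=\beta_j=0$.

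For the simplicial complex assertion, closure under faces is automatic, so only the intersection condition remains. Given two distinct top-dimensional simplices $\sigma_j=(b_0,\ldots,b_j,c_j,\ldots,c_n)$ and $\sigma_k=(b_0,\ldots,b_k,c_k,\ldots,c_n)$ with $j<k$, I would parameterize points of $\bar{\sigma}_j$ by the barycentric coordinates $(\lambda_0,\ldots,\lambda_n)$ of the projection $x\in\bar{(a_0,\ldots,a_n)}$. A direct computation shows $(x,t)\in\bar{\sigma}_j$ iff $A_j(x)\leq t\leq B_j(x)$, where
\begin{equation*}
A_j(x)=\sum_{i\leq j}\lambda_i r_i+\sum_{i>j}\lambda_i s_i,\qquad B_j(x)=\sum_{i<j}\lambda_i r_i+\sum_{i\geq j}\lambda_i s_i.
\end{equation*}
The identity $B_{j+1}=A_j$ together with $A_j-A_{j+1}=\lambda_{j+1}(s_{j+1}-r_{j+1})\geq 0$ shows that the intervals $[A_j,B_j]$ stack consecutively in $j$. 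Hence $[A_j,B_j]\cap[A_k,B_k]$ is nonempty only when it collapses to a single height, which occurs exactly when $\lambda_i(s_i-r_i)=0$ for every $j<i<k$. A brief bookkeeping then identifies $\bar{\sigma}_j\cap\bar{\sigma}_k$ with the convex hull of the common geometric vertices $\{b_0,\ldots,b_j\}\cup\{c_k,\ldots,c_n\}$ enlarged by the coincident points $b_i=c_i$ for those $j<i<k$ with $r_i=s_i$, which is a common face of both simplices.

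The main obstacle is the last identification: intermediate indices $i$ with $r_i=s_i$ produce coincident points $b_i=c_i$ that appear simultaneously as a vertex of $\sigma_j$ (as $c_i$) and of $\sigma_k$ (as $b_i$), and these must be included among the common vertices for the intersection to be literally a common face. Everything else is routine barycentric computation in the prism over $(a_0,\ldots,a_n)$.
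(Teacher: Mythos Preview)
Your argument is correct. The paper itself does not give a proof of this lemma: it simply cites Lemma~8.1.10 of van den Dries, \emph{Tame Topology and o-minimal Structures}. What you wrote is essentially a self-contained version of that standard prism decomposition argument --- the projection trick for affine independence, and the fibrewise description $A_j(x)\le t\le B_j(x)$ with $B_{j+1}=A_j$ showing that the closed top simplices stack in the last coordinate and therefore meet along common faces. Your treatment of the intermediate indices $j<i<k$ with $r_i=s_i$ (where $b_i=c_i$ must be counted among the common vertices) is exactly the delicate point, and you handle it correctly.

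One small remark: you explicitly check $\bar\sigma_j\cap\bar\sigma_k$ only for the top-dimensional simplices, whereas the collection also contains all their faces. This is harmless --- once the closures of the maximal simplices meet in common faces, the same follows for any pair of faces by intersecting inside a single simplex --- but it is worth stating, since in the open-simplex convention used in the paper one ultimately needs that any two distinct members of the collection (of any dimension) are disjoint.
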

\begin{proof}See Lemma 8.1.10 in \cite{MR1633348}.

\end{proof}
\begin{deff}Let $S$ be definable set and let $(K,\phi)\in \Delta(S)$. We define a \textbf{triangulated set} as the pair $(S,\phi(K))$ where $\phi(K)=\{\phi(\sigma):\sigma \in K\}$. Given a triangulated set $(S,\mathcal{P})$ and $C,D\in \mathcal{P}$ we call $D$ a \textbf{face} of $C$ if $D\subset cl(C)$, a \textbf{proper face} of $C$ if $D\subset cl(C)-C$ and a \textbf{vertex} of $C$ if it has dimension $0$.
\end{deff}
\begin{deff}\textbf{A multivalued function $F$ on the triangulated set $(S,\mathcal{P})$} is a finite collection of functions, $F=\{f_{C,i}:C\in \mathcal{P},1\leq i \leq k(C)\}$, $k(C)\geq 0$, each function $f_{C,i}:C \rightarrow R$ definable and $f_{C,1}<\ldots<f_{C,k(C)}$. We set
$$\begin{array}{l}
\Gamma(F)=\bigcup_{f\in F}\Gamma(f),\\
F|_{C}=\{f_{C,i}:1\le i \le k(C)\}, C\in \mathcal{P},\\
P^{F}=\{\Gamma(f):f\in F\}\cup
\{(f_{C,i},f_{C,i+1}):C\in \mathcal{P},1\le i < k(C)\},\\
S^{F}=\textrm{the union of the sets in } \mathcal{P}^{F}.\\
\end{array}$$

We call $F$ \textbf{closed} if for each pair $C,D\in \mathcal{P}$ with $D$ a proper face of $C$ and each $f\in F|_C$ there is $g\in F|D$ such that $g(y)=\lim_{x\to y} f(x)$ for all $y\in D$. Note that then each  $f\in F$, say $f\in F|C$, extends continuously to a definable function $cl(f):cl(C)\cap S \rightarrow R$ such that the restrictions of $cl(f)$ to the faces of $C$ in $\mathcal{P}$ belong to $F$.

We call $F$ \textbf{full} if is closed, $k(C)\geq 1$ for all $C\in \mathcal{P}$, and 
\begin{itemize}
 \item[1)]for each pair $C,D\in \mathcal{P}$ with $D$ a proper face of $C$ and each $g\in F|D$ we have $g=cl(f)|D$ for some $f\in F|C$, where $cl(f)$ is the continuous extension of $f$ to $cl(C)\cap S$,

 \item[2)]if $f_{1},f_{2}\in F|C$, $f_{1}\neq f_{2}$, then there exist at least one vertex of $C$ where $cl(f_{1})$ and $cl(f_{2})$ take different values.
\end{itemize}
\end{deff}
\begin{obs}The definition 8.2.5 in \cite{MR1633348} of a full multivalued function differs slightly from the one given here because there, only $1)$ is assumed to be satisfied. Let $F$ be a closed multivalued function on a triangulated set $(S,\phi(K))$ with $1)$ and $K'$ the first barycentric subdivision of $K$. Then the multivalued function $F'$ on the triangulated set $(S,\phi(K'))$, obtained by the restrictions of the functions in $F$ to the sets of $\phi(K')$ is full. The problem is that we cannot use this construction because we are interested in \IC\ and this property have a bad behavior with the first barycentric subdivision as we saw in Observation \ref{obs:icprimerasub}.
\end{obs}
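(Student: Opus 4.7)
The content of the observation that requires argument is the claim that $F'$ is full; the remainder is commentary contrasting the present definition with the one in \cite{MR1633348} and noting its incompatibility with \IC. I would prove the substantive claim simplex by simplex in $\phi(K')$, using the chain structure of the first barycentric subdivision.

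The structural ingredient is that each $C'\in \phi(K')$ sits inside a unique smallest $C=C(C')\in \phi(K)$, namely the largest element of the chain $C_0\subset C_1\subset\cdots\subset C_n$ of simplices of $\phi(K)$ whose barycenters are the vertices of $C'$; in particular the barycenter $\hat{C}$ of $C$ is a vertex of $C'$ and lies in the open piece $C$. Consequently $F'|_{C'}=\{\tilde{f}|_{C'}:\tilde{f}\in F|_{C}\}$, and for such $\tilde f$ the continuous extension $cl(\tilde{f}|_{C'})$ to $cl(C')\cap S$ agrees with $cl(\tilde{f})$ wherever both are defined, since $cl(C')\subseteq cl(C)$.

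Given this, I would check each of the three clauses defining fullness. For closedness together with clause 1): if $D'$ is a proper face of $C'$, then $D:=C(D')$ is a face of $C$ in $\phi(K)$; for each $\tilde{f}\in F|_{C}$, hypothesis 1) for $F$ produces some $\tilde{g}\in F|_{D}$ with $\tilde{g}=cl(\tilde{f})|_{D}$, and then $\tilde{g}|_{D'}\in F'|_{D'}$ is the element witnessing closedness and 1) for $F'$ at the pair $(C',D')$. The inequality $k(C')\geq 1$ transfers from the corresponding condition on $F$ that is implicit in the set-up (each $C\in\phi(K)$ carries at least one function, so does $C'\subseteq C$).

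The decisive clause is 2), which is precisely what barycentric subdivision buys. Two distinct functions $f'_1,f'_2\in F'|_{C'}$ come from two distinct $\tilde{f}_1,\tilde{f}_2\in F|_{C}$; by the strict pointwise ordering $\tilde{f}_{C,1}<\cdots<\tilde{f}_{C,k(C)}$ on the open simplex $C$, the values $\tilde{f}_1(\hat{C})$ and $\tilde{f}_2(\hat{C})$ differ, and since $\hat{C}\in C$ the continuous extension $cl(f'_i)(\hat{C})$ coincides with $\tilde{f}_i(\hat{C})$. Hence $cl(f'_1)$ and $cl(f'_2)$ take different values at the vertex $\hat{C}$ of $C'$, which is exactly what 2) demands. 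The only subtlety is purely bookkeeping around the smallest-carrier map $C'\mapsto C(C')$ and the verification that $\hat{C}$ is indeed among the vertices of $C'$; no analytic difficulty arises.
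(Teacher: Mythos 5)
Your argument is correct, and the substantive point is exactly the right one: for each $C'\in\phi(K')$ the smallest carrier $C\in\phi(K)$ has a vertex of $C'$ lying in the \emph{open} piece $C$, where the members of $F|_C$ are strictly separated, and this yields clause $2)$; clauses $1)$, closedness and $k(C')\geq 1$ transfer routinely as you describe. Note that the paper states this observation without proof (it is only a remark explaining why the barycentric shortcut of \cite{MR1633348} is not used), so there is no in-text argument to compare against; your route is the natural one, and it is in fact the same mechanism the paper exploits later in Lemma \ref{lema:sustitucion}, where the existence of a vertex of the refined piece inside the open original piece is obtained not from barycentres but from \VC\ via Lemma \ref{lema:VCcomobar} a). One small notational caveat: an element $C=\phi(\sigma)\in\phi(K)$ is in general not a simplex, so ``the barycenter $\hat{C}$ of $C$'' should be read as $\phi(\hat{\sigma})$, the image of the barycenter of the carrier simplex $\sigma\in K$ of $\tau'$ (with $C'=\phi(\tau')$); since $\hat{\sigma}$ lies in the open simplex $\sigma$ and is a vertex of $\tau'$, the point $\phi(\hat{\sigma})$ is a vertex of $C'$ lying in $C$, which is all your argument needs.
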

\begin{lema}\label{lema:multivacerrada}Let $F$ be a multivalued function on the triangulated set $(S,\mathcal{P})$ such that $\Gamma(F)$ is closed in $S\times R$, and there is $M>0$ such that $\Gamma(F)\subset A\times [-M,M]$. Then $F$ is closed.
\end{lema}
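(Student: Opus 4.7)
We must check: for each pair $C,D\in\mathcal{P}$ with $D$ a proper face of $C$ and each $f=f_{C,i}\in F|_C$, there exists $g\in F|_D$ with $g(y)=\lim_{x\to y,\,x\in C}f(x)$ for every $y\in D$. Fix $y\in D$ and consider
\[
L_y:=\{\,r\in R: \text{some definable } \gamma:(0,1)\to C\text{ has } \gamma(t)\to y \text{ and } f(\gamma(t))\to r\,\}.
\]
Definable curve selection applied at $y\in\mathrm{cl}(C)\setminus C$ produces a definable $\gamma$ with $\gamma(t)\to y$; since $f\circ\gamma$ is a bounded definable function on $(0,1)$, the Monotonicity Theorem supplies a limit, so $L_y\neq\emptyset$. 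Moreover, any $r\in L_y$ satisfies $(y,r)\in\mathrm{cl}(\Gamma(f))\subset\Gamma(F)$; because the cells in $\mathcal{P}$ are pairwise disjoint, $\Gamma(F)\cap(\{y\}\times R)=\{(y,f_{D,j}(y)):1\le j\le k(D)\}$, so $L_y$ is a finite subset of $\{f_{D,j}(y):j\}$.

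The crux is to prove $|L_y|=1$. Suppose for contradiction $r_1<r_2$ both lie in $L_y$, witnessed by definable curves $\gamma_1,\gamma_2:(0,1)\to C$. Write $C=\phi(\sigma)$ for an open simplex $\sigma\in K$, set $z_0:=\phi^{-1}(y)\in\mathrm{cl}(\sigma)$ and $z_i(t):=\phi^{-1}(\gamma_i(t))\in\sigma$. By convexity of $\sigma$, the definable family
\[
\alpha_t:[0,1]\to C,\qquad \alpha_t(s):=\phi\bigl((1-s)z_1(t)+s\,z_2(t)\bigr),
\]
lands in $C$, with $\alpha_t(0)=\gamma_1(t)$ and $\alpha_t(1)=\gamma_2(t)$. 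Fix $s\in(r_1,r_2)$; for all sufficiently small $t$, the intermediate value theorem applied to the continuous $f\circ\alpha_t$ together with o-minimal definable choice yields a definable $u(t)\in[0,1]$ with $f(\alpha_t(u(t)))=s$. Since $(1-u(t))z_1(t)+u(t)z_2(t)$ is a convex combination of two points tending to $z_0$, it also tends to $z_0$, and continuity of $\phi$ at $z_0$ gives $\alpha_t(u(t))\to y$. Hence $s\in L_y$, contradicting the finiteness of $L_y$.

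Define $g(y)$ to be the unique element of $L_y$; an analogous curve-selection and definable-choice argument (any failure of continuity would furnish two distinct accumulation values at some point of $D$) shows that $g:D\to R$ is continuous. Since $g(y)\in\{f_{D,j}(y)\}_j$ pointwise, the definable sets $D_j:=\{y\in D:g(y)=f_{D,j}(y)\}$ are closed in $D$ as zero-sets of $g-f_{D,j}$, pairwise disjoint since the $f_{D,j}(y)$ are distinct, and cover $D$; being a finite disjoint closed cover, they are in fact clopen, so the definable connectedness of $D=\phi(\tau)$ forces $g=f_{D,j}$ for a single $j$, giving $g\in F|_D$. The main obstacle throughout is the uniqueness step for $L_y$: once $|L_y|=1$, the remaining bookkeeping is formal.
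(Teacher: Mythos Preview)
The paper does not prove this lemma at all; it simply cites Lemma 8.2.6 in van den Dries' book. Your argument is correct and is in fact essentially the same as van den Dries' proof there: the key step---using the convexity of the open simplex $\sigma=\phi^{-1}(C)$ to interpolate by straight segments between two curves and then invoking the intermediate value theorem to manufacture a curve with limit value $s$ for \emph{every} $s\in(r_1,r_2)$, contradicting finiteness of $L_y$---is exactly his device.

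Two small remarks. First, your continuity step for $g$ is stated tersely but is sound; to spell it out: if $g$ were discontinuous at some $y_0\in D$, curve selection in $D$ would give a definable $\eta:(0,1)\to D$ with $\eta(t)\to y_0$ and $g(\eta(t))\to r\neq g(y_0)$. Since each $(\eta(t),g(\eta(t)))$ lies in $\overline{\Gamma(f)}$ (by definition of $g$), and $\overline{\Gamma(f)}$ is closed, we get $(y_0,r)\in\overline{\Gamma(f)}$, i.e.\ $r\in L_{y_0}$, contradicting $|L_{y_0}|=1$. Second, your use of $\phi^{-1}$ and the continuity of $\phi$ at $z_0=\phi^{-1}(y)$ is legitimate because $D\in\mathcal{P}$ forces $\phi^{-1}(D)$ to be a face of $\sigma$ lying in $K$, so $z_0\in|K|$ and $\phi$ is continuous there.
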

\begin{proof}See Lemma 8.2.6 in \cite{MR1633348}.
\end{proof}
\begin{deff}Let $S$ be a definable set in $R^m$, $(K,\phi)\in \Delta(S)$ and $S'\subset S\times R$ a definable set. Then a triangulation $(L,\psi)\in \Delta(S')$ in $R^{n+1}$ is said to be a \textbf{lifting} of $(K,\phi)$ if $K=\{\pi_n(\sigma):\sigma\in K\}$ and the diagram 
\begin{displaymath}
\xymatrix{ |L|\ar[r]^\psi \ar[d]_{\pi_{n}} &
     S'\ar[d]^{\pi_{m}}\\
|K|  \ar[r]_{\phi} & S}
\end{displaymath}
commutes where $\pi_m$ and $\pi_n$ are the projections maps on the first $m$ and $n$ coordinates respectively.
\end{deff}
The proof of the Triangulation theorem carries an induction argument. In our case, that is, to prove TISSP-triangulation theorem, we will need the following lemma in the induction step. Its proof is an adaptation --taking care of TIP-- of that of Lemma 8.2.8 in \cite{MR1633348}.
\begin{lema}\label{lema:modlemadries}Let $A\subset R^{m+1}$ be a closed and bounded definable set and $(K,\phi)\in \Delta(A)$ a triangulation in $R^p$ with \IC. Let $F$ be a full multivalued function on $(A,\phi(K))$. Then $(K,\phi)$ can be lifted to a triangulation $(L,\psi)\in \Delta(A^F;C^l)_{C^l\in \phi(K)^F}$ in $R^{p+1}$ with \IC.
\end{lema}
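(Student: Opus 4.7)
The plan is to adapt the construction in the proof of Lemma 8.2.8 of \cite{MR1633348}, which already builds a lift of $(K,\phi)$ to a triangulation of $A^F$; I need only add a TIP check at the end. For each vertex $v\in Vert(K)$, the closed $0$-cell $\{\phi(v)\}$ of $\phi(K)$ supports $k_v:=k(\{\phi(v)\})\ge 1$ constants $s_v^1<\cdots<s_v^{k_v}$ coming from $F|_{\{\phi(v)\}}$. Take as vertices of $L$ the points $\tilde v^i:=(v,s_v^i)\in R^{p+1}$. For $\sigma=(v_0,\dots,v_n)\in K$ with $C:=\phi(\sigma)$ and each $f_{C,i}\in F|_C$, closedness together with fullness $(1)$ gives unique indices $\alpha_j^i$ with $cl(f_{C,i})(\phi(v_j))=s_{v_j}^{\alpha_j^i}$. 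Declare as simplices of $L$ the \emph{graph simplices} $\tau_{\sigma,i}:=(\tilde v_0^{\alpha_0^i},\dots,\tilde v_n^{\alpha_n^i})$ and, for each band $(f_{C,i},f_{C,i+1})$ and each $j=0,\dots,n$, the \emph{band simplices} $(\tilde v_0^{\alpha_0^i},\dots,\tilde v_j^{\alpha_j^i},\tilde v_j^{\alpha_j^{i+1}},\dots,\tilde v_n^{\alpha_n^{i+1}})$, dropping any that has a repeated consecutive vertex. Since $v_0,\dots,v_n$ are affinely independent in $R^p$ ($\sigma$ being a simplex of $K$) and fullness $(2)$ forces $s_{v_j}^{\alpha_j^i}<s_{v_j}^{\alpha_j^{i+1}}$ for some $j$, Lemma \ref{lema:usolevant} applied in $R^{p+1}$ makes $L$, together with the faces of these simplices, a closed simplicial complex.

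Define $\psi:|L|\to R^{m+2}$ piecewise affinely by $\psi(\tilde v^i):=(\phi(v),s_v^i)$. The argument in Lemma 8.2.8 of \cite{MR1633348}---which uses only closedness and the two fullness conditions---shows that $\psi$ is a definable homeomorphism onto $A^F$ sending each graph simplex onto the portion of $\Gamma(f_{C,i})$ over $\phi(\sigma)$ and each band simplex into the band cell $(f_{C,i},f_{C,i+1})$, so that $(L,\psi)\in\Delta(A^F;C^l)_{C^l\in\phi(K)^F}$. Commutativity of the lifting diagram is immediate at vertices and extends affinely.

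Only TIP of $(L,\psi)$ in $R^{m+2}$ remains. On a graph simplex, the $\psi$-images $(\phi(v_j),s_{v_j}^{\alpha_j^i})$ project onto $\phi(v_0),\dots,\phi(v_n)$, which are affinely independent in $R^{m+1}$ by TIP of $(K,\phi)$, so the lifts are affinely independent in $R^{m+2}$. On a band simplex, I apply Lemma \ref{lema:usolevant} a \emph{second} time, now in $R^{m+2}$, to the simplex $(\phi(v_0),\dots,\phi(v_n))$ of $R^{m+1}$ (itself a simplex by TIP of $(K,\phi)$) with heights $r_j:=cl(f_{C,i})(\phi(v_j))\le r_j':=cl(f_{C,i+1})(\phi(v_j))$, strict at some $j$ by fullness $(2)$; this produces precisely the $(n+1)$-tuple of $\psi$-images. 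For disjointness: simplices of $L$ lying over distinct cells of $\phi(K)^F$ have $\psi$-images in disjoint cells of $A^F$, and simplices lying over a single band form the staircase decomposition of the prism from Lemma \ref{lema:usolevant}, hence are pairwise disjoint as open simplices. The main obstacle is not the TIP check but the routine yet delicate bookkeeping showing that the staircase simplices defined over different $\sigma\in K$ glue correctly along common boundary faces, which is exactly what closedness and fullness of $F$ are designed to guarantee, as in \cite{MR1633348}.
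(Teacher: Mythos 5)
Your skeleton is the paper's: build the staircase complex over each $C\in\phi(K)$ via Lemma \ref{lema:usolevant} in $R^{p+1}$, and get affine independence of the lifted vertices by a second application of Lemma \ref{lema:usolevant}, now over the simplex $(\phi(a_0),\ldots,\phi(a_n))$ (which exists by \IC\ of $(K,\phi)$) with heights $cl(f)(\phi(a_j))\le cl(g)(\phi(a_j))$, strict at some $j$ by condition $2)$ of fullness. That part is correct and is exactly what the paper does. The genuine gap is your definition of $\psi$: you take the piecewise \emph{affine} map determined by $\tilde v^i\mapsto(\phi(v),s_v^i)$. Such a map sends each simplex of $L$ onto the straight simplex spanned by the images of its vertices, so its image is the realization of that spanned complex, not $A^F$: the cells of $A^F$ are graphs and bands of genuinely non-affine definable functions over the curved cells $\phi(\sigma)$, and the commutativity requirement $\pi_{m+1}\circ\psi=\phi\circ\pi_p$ forces $\psi$ to be exactly as non-affine as $\phi$. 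Consequently ``commutativity is immediate at vertices and extends affinely'' is not an argument; with your $\psi$ the diagram fails as soon as $\phi$ is not affine on some simplex, and $(L,\psi)$ is not a triangulation of $A^F$ at all. The correct map (the paper's, following 8.2.8 in \cite{MR1633348}) is affine only along the fibers: the point of $|L(f,g)|$ having the same affine coordinates with respect to $b_0,\ldots,b_n,c_0,\ldots,c_n$ as $\phi^{-1}(x)$ has with respect to $a_0,\ldots,a_n$ is sent to $(x,t\,cl(f)(x)+(1-t)cl(g)(x))$. You cannot simultaneously use the affine extension and invoke the conclusion of 8.2.8, which is about a different map.

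This slip propagates into your disjointness check for \IC. Condition $ii)$ of \IC\ concerns the straight simplices $\tau^\psi$ spanned by the $\psi$-images of the vertices, not the sets $\psi(\tau)$; for the correct (non-affine) $\psi$ these differ, and $\tau^\psi$ need not be contained in $\overline{C^l}$. Hence ``simplices over distinct cells have $\psi$-images in disjoint cells of $A^F$'' establishes nothing about $\tau_1^\psi\cap\tau_2^\psi$ when $\tau_1,\tau_2$ lie over different cells (it only restates injectivity of $\psi$). Your within-band argument is the right one and is the paper's: the spanned simplices $(\tilde b_0,\ldots,\tilde b_j,\tilde c_j,\ldots,\tilde c_n)$ form the closed complex of Lemma \ref{lema:usolevant} over $(\phi(a_0),\ldots,\phi(a_n))$, hence are pairwise disjoint; the paper reduces the whole check to the pieces $(L(f,g),\psi_{f,g})$ and $(L(f),\psi_f)$. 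If you want to say more than the paper does about simplices over different cells, the relevant point is that a spanned simplex over $C$ projects into the closed straight simplex $\overline{(\phi(a_0),\ldots,\phi(a_n))}$ and that $\{\sigma^{\phi}:\sigma\in K\}$ is itself a closed complex by \IC\ of $(K,\phi)$ together with Lemma \ref{lema:complejo}, with the lifted data matching over common faces by closedness of $F$ -- not that the curved cells of $A^F$ are disjoint.
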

\begin{proof}Fix a linear order in $Vert(K)$. We now construct $L$ and $\psi$ above each $C\in \phi(K)$. Let $C\in \phi(K)$ and let $a_0,\ldots,a_n$ the vertices if $\phi^{-1}(C)$ listed in the order we imposed on $Vert(K)$. Let $f<g$ be two successive members of $F|_C$. Put
\begin{displaymath}\left\{
\begin{array}{l}
r_{j}=cl(f)(\phi(a_{j})),\\
s_{j}=cl(g)(\phi(a_{j})),\\
b_{j}=(a_{j},r_{j}),\\
c_{j}=(a_{j},s_{j}).\\
\end{array}\right.
\end{displaymath}
As $F$ satisfies $ii)$ of fullness the hypotheses for Lemma \ref{lema:usolevant} hold. Let $L(f,g)$ be the complex in $R^{p+1}$ constructed in that lemma. Define the map $\psi_{f,g}^{-1}:[cl(f),cl(g)]\rightarrow |L(f,g)|$ by
$$\psi_{f,g}^{-1}(x,tcl(f)(x)+(1-t)cl(g)(x))=t\Phi_{b}(x)+(1-t)\Phi_{c}(x), 0\leq t\leq 1,$$
where $\Phi_{b}(x)$ and $\Phi_{c}(x)$ are the points of
$(b_{0},\ldots,b_{n})$ and $(c_{0},\ldots,c_{n})$ with the same affine coordinates
with respect to $b_{0},\ldots,b_{n}$ and
$c_{0},\ldots,c_{n}$ as $\phi^{-1}(x)$ has with respect to $a_{0},\ldots,a_{n}$. We now check that $\psi^{-1}_{f,g}$ is indeed continuous. The map $\psi^{-1}_{f,g}$ is a bijection
and it follows from Corollary 6.1.13 i) in \cite{MR1633348} that it is continuous.
By Corollary 6.1.12 in \cite{MR1633348}, $\psi^{-1}_{f,g}$ is a homeomorphism.
We also define for each $f\in F|C$ the complex $L(f)$ in $R^{p+1}$ as the $n$-simplex $(b_0,\ldots,b_n)$ with $b_j=(a_j,cl(f)(\phi(a_{j})))$, and all its faces. Then $\psi_f^{-1}:\Gamma(cl(f))\rightarrow |L(f)|$ is by definition the homeomorphism given by 
$$\psi_f^{-1}(x,cl(f)(x))=\Phi_b(x),$$
where $\Phi_b(x)$ is defined as before.

Let $L$ be the union of all complexes $L(f,g)$ and $L(f)$, $C\in \phi(K)$,
$f,g\in F|_{C}$ successive. Let $\psi:|L|\rightarrow A^{F}$ be the map
such that $\psi|_{L(f,g)}=\psi_{f,g}$ and $\psi|_{L(f)}=\psi_{f}$, for every $C\in
\phi(K)$ and $f,g\in F|_{C}$ successive. Then $L$ is a closed complex in $R^{p+1}$ and
the map $\psi$ is well-defined and bijective. It is easy to check that $\psi$ is indeed continuous. Therefore by Corollary 6.1.12 in \cite{MR1633348}, is a homeomorphism.
Finally let us check that the triangulation $(L,\psi)$  has \IC. It is enough to check it for every $(L(f,g),\psi_{f,g})$ and $(L(f),\psi_{f})$, with $f,g\in F|_C$ successive. Given $v\in Vert(L(f,g))$ then either $v=b_j$ or $v=c_j$ for some $j$. Hence either $\psi_{f,g}(v)=\psi_{f,g}(b_{j})=(\phi(a_{j}),r_{j}):=\tilde{b}_{j}$ or $\psi_{f,g}(v)=\psi_{f,g}(c_{j})=(\phi(a_{j}),s_{j}):=\tilde{c}_{j}$. Observe that by \IC\ of $(K,\phi)$ we have that $\phi(a_{0}),\ldots,\phi(a_{n})$ are affinely independent. Therefore by Lemma \ref{lema:usolevant} the $(n+1)$-simplices $(\tilde{b}_{0},\ldots,\tilde{b}_{j},\tilde{c}_{j},\ldots,\tilde{c}_{n})$ and all their faces is a closed simplicial complex. This is enough to prove that $(L(f,g),\psi_{f,g})$ has \IC. We can prove that $(L(f),\psi_{f})$ has \IC\ in a similar way.
\end{proof}
\begin{cor}\label{obs:lemamodvanden}Let $A\subset R^m$ be a closed and bounded definable set and $(\phi,K)\in \Delta(A)$ with \IC. Let $F$ be a full multivalued function on the triangulated set $(A,\phi(K))$. Let 
$(L,\psi)\in \Delta(A^F;C^l)_{C^l\in \phi(K)^{F}}$  in $R^{p+1}$ with \IC\ constructed in the proof of Lemma
\ref{lema:modlemadries}. Let $\tau\in L$ be such that
$\psi(\tau)\subset C^{l}$ for some $C^{l}\in \phi(K)^{F}$,
$C=\pi(C^{l})\in \phi(K)$, $\phi^{-1}(C)=(a_{0},\ldots,a_{n})$.
Then
\begin{enumerate}
\item[1)]$\pi(\psi(\tau))=C$,

\item[2)]if $C^{l}=(f,g)_{C}$, $f,g\in F|_{C}$
successive, then $\tau$ is either an $(n+1)$-simplex
$(b_{0},\ldots,b_{j},c_{j},\ldots,c_{n})$, $b_{j}\neq c_{j}$, or
is a $n$-simplex \linebreak $(b_{0},\ldots,b_{j-1},c_{j},\ldots,c_{n})$,
$b_{j}\neq c_{j}$, where $b_{j}=(a_{j},cl(f)(\phi(a_{j})))$ and
$c_{j}=(a_{j},cl(g)(\phi(a_{j})))$, $j=0,\ldots,n$,

\item[3)]if $C^{l}=\Gamma(f)$ then $\tau$ is
an $n$-simplex $(b_{0},\ldots,b_{n})$, where \linebreak
$b_{j}=(a_{j},cl(f)(\phi(a_{j})))$, $j=0,\ldots,n$. In fact,
$\psi(\tau)=\Gamma(f)=C^{l}$.
\end{enumerate}
\end{cor}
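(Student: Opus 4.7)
The plan is to read the conclusions directly off the explicit construction of $L$ and $\psi$ in the proof of Lemma~\ref{lema:modlemadries}. The key observation is that the cells $C^l \in \phi(K)^F$ are pairwise disjoint and cover $A^F$, so the hypothesis $\psi(\tau)\subset C^l$ pins $C^l$ down uniquely. This in turn identifies which subcomplex of $L$ the open simplex $\tau$ must lie in: namely $\tau\in L(f,g)$ if $C^l=(f,g)_C$, and $\tau\in L(f)$ if $C^l=\Gamma(f)$. With $\tau$ localized, I would then enumerate the possibilities combinatorially.

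For part~(2), Lemma~\ref{lema:usolevant} describes $L(f,g)$ as the closed complex generated by the $(n+1)$-simplices $(b_0,\ldots,b_j,c_j,\ldots,c_n)$ with $b_j\neq c_j$, so every face has the combinatorial form $(b_{i_0},\ldots,b_{i_k},c_{j_0},\ldots,c_{j_l})$ with ordering constraint $i_k\le j_0$. For $\psi_{f,g}(\tau)$ to lie in the \emph{open} strip $(f,g)_C$ I would extract two necessary conditions. First, $\tau$ must contain at least one $b_i$ and at least one $c_j$, for otherwise $\psi_{f,g}$ would send $\tau$ onto part of $\Gamma(f)$ or $\Gamma(g)$. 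Second, the union of the $b$-indices and $c$-indices occurring in $\tau$ must equal $\{0,\ldots,n\}$, for otherwise the $\pi_p$-image of $\tau$ would be a proper face of $\sigma=\phi^{-1}(C)$ and $\psi(\tau)$ would sit in a strip cell over a proper face of $C$. Combining these conditions with the ordering constraint leaves exactly the two forms claimed: the full $(n+1)$-simplex (indices overlap at $j$) or the $n$-simplex $(b_0,\ldots,b_{j-1},c_j,\ldots,c_n)$ (indices jump from $j-1$ to $j$).

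For part~(3), $\tau\in L(f)$ has the form $(b_{i_0},\ldots,b_{i_k})$ for some subset of $\{0,\ldots,n\}$, and $\psi_f$ maps it onto the open graph of $cl(f)$ over the open face $C'=\phi((a_{i_0},\ldots,a_{i_k}))$ of $C$. If this index set were proper, fullness condition~1) identifies $cl(f)|_{C'}$ with some $f'\in F|_{C'}$, so $\psi(\tau)=\Gamma(f')\neq\Gamma(f)$, contradicting the hypothesis. Therefore $\tau=(b_0,\ldots,b_n)$ and $\psi(\tau)=\Gamma(f)$.

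Part~(1) then drops out: in every form enumerated in (2) and (3), the set of $\pi_p$-projections of vertices of $\tau$ is exactly $\{a_0,\ldots,a_n\}$, hence $\pi_p(\tau)=\sigma$, and the commutativity $\pi\circ\psi=\phi\circ\pi_p$ built into the definition of a lifting yields $\pi(\psi(\tau))=\phi(\sigma)=C$. The only subtle step that I expect to need care is the use of fullness in part~(3): without condition~1), the $\psi$-image of a proper face of $(b_0,\ldots,b_n)$ would merely sit in the closure of $\Gamma(f)$, and one could not conclude it falls into a genuinely different cell of $\phi(K)^F$. An analogous invocation of fullness underlies the "projection surjects onto $\sigma$" condition in part~(2); flagging this is essentially the whole content of the argument beyond bookkeeping.
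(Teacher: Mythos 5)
Your proof is correct and follows exactly the route the paper intends: the corollary is stated there without proof, as something to be read off the explicit construction of the complexes $L(f,g)$, $L(f)$ and the map $\psi$ in Lemma \ref{lema:modlemadries}, and your localization of $\tau$ plus the two necessary conditions (a $b$-vertex and a $c$-vertex both present, and the index set exhausting $\{0,\ldots,n\}$, via the projection and the commutativity of the lifting square) is precisely that bookkeeping. Two cosmetic remarks only: in part (3) the fact that $cl(f)$ restricted to a proper face belongs to $F$ is the closedness of $F$ rather than condition 1) of fullness, and in part (2) possible coincidences $b_i=c_i$ at non-pivot indices affect only which labelling of $\tau$ exhibits the stated form with $b_j\neq c_j$, not the conclusion.
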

 
In the following lemma we show how we can achieve $2)$ of fullness of a multivalued function without losing \IC\ thanks to \VC.

\begin{lema}\label{lema:sustitucion}Let $A$ be a closed and bounded definable set, let $(K,\phi)\in \Delta(A)$ and $F$ a closed multivalued function on the triangulated set $(A,\phi(K))$ such that $1)$ of fullness is satisfied. Let $(K_{0},\phi_{0})\in \Delta(A;\phi(\sigma))_{\sigma\in K}$ with \VC($\partial \phi(\sigma):\sigma\in K$). Then, the multivalued function $F_{0}$ on $(A,\phi(K_0))$, obtained by the restrictions of the functions in $F$ to the sets of $\phi(K_0)$, is full.
\end{lema}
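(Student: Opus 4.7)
The plan is to verify, in turn, each of the four conditions defining fullness for $F_0$: closedness, the bound $k(C_0)\geq 1$, property 1), and property 2). Throughout I will exploit the compatibility of $(K_0,\phi_0)$ with $(K,\phi)$: every $C_0\in\phi(K_0)$ is contained in a unique $C=C(C_0)\in\phi(K)$, and by construction $F_0|_{C_0}=\{\tilde f|_{C_0}:\tilde f\in F|_C\}$.

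The crux is property 2), which simultaneously forces distinct functions in $F|_C$ to remain distinct after restriction, so that $F_0$ is a bona fide multivalued function with $k(C_0)=k(C)\geq 1$. Given $\tilde f_1<\tilde f_2$ in $F|_C$, I want to locate a vertex $w$ of $C_0$ at which strict inequality persists. Because $\tilde f_1<\tilde f_2$ holds pointwise on the \emph{open} simplex $C=\phi(\sigma)$, it is enough to find a vertex $w$ of $C_0$ that lies in $C$ itself rather than in $\partial\phi(\sigma)=\mathrm{cl}(C)-C$; the strict inequality then transfers to $cl(\tilde f_1|_{C_0})(w)<cl(\tilde f_2|_{C_0})(w)$ by continuity of $\tilde f_i$ on $C$. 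This is precisely what Lemma \ref{lema:VCcomobar} a) supplies, using the hypothesis SSP$(\partial\phi(\sigma):\sigma\in K)$ on $(K_0,\phi_0)$. This is the main (and essentially only) non-routine step: without SSP, a simplex of $K_0$ could be pushed entirely inside $\partial\phi(\sigma)$, and since $F$ is not assumed to satisfy property 2), the restrictions of $\tilde f_1,\tilde f_2$ to $C_0$ might then collapse to the same function on $C_0$.

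Closedness and property 1) for $F_0$ reduce to bookkeeping between the two levels. For $D_0$ a proper face of $C_0$ in $\phi(K_0)$, set $D=C(D_0)$ and $C=C(C_0)$; since $D_0\subset \mathrm{cl}(C_0)\cap A\subset\mathrm{cl}(C)\cap A$ and $\phi(K)$ is a complex, $D$ either coincides with $C$ or is a proper face of $C$. In the former case each $\tilde f\in F|_C$ is already continuous on $D_0\subset C$, so both closedness and property 1) for $F_0$ follow by plain restriction. In the latter case, closedness of $F$ yields $cl(\tilde f)|_D\in F|_D$ for every $\tilde f\in F|_C$, and property 1) for $F$ writes any given $\tilde g\in F|_D$ as $cl(\tilde f)|_D$ for some $\tilde f\in F|_C$; restricting these equalities from $D$ down to $D_0$ produces exactly the required $g=cl(\tilde f|_{C_0})|_{D_0}\in F_0|_{D_0}$ on either side.
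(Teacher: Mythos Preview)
Your proof is correct and follows essentially the same approach as the paper: the key step in both is to invoke Lemma~\ref{lema:VCcomobar}~a) (via the hypothesis \VC$(\partial\phi(\sigma):\sigma\in K)$) to produce a vertex of $C_0$ lying in the open cell $C=\phi(\sigma)$, where two distinct functions of $F|_C$ automatically differ. The paper dismisses closedness and property~1) of $F_0$ as ``clear'', while you spell out the case analysis $D=C$ versus $D$ a proper face of $C$; this is a welcome elaboration but not a different argument.
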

\begin{proof}Firstly we observe that by Lemma \ref{lema:triangclausu} the triangulation $(K_0,\phi_0)$ is compatible with the subsets $\partial \phi(\sigma)$, $\sigma \in K$. Clearly the multivalued function $F_{0}$ is closed and satisfies $1)$ of fullness. Let us check that $F_0$ satisfies also $2)$ of fullness. Let $C=\phi_{0}(\tau)$,
where $\tau\in K_{0}$, and let $f_{1},f_{2}\in F_{0}|_{C}$ be two different functions.  By construction, there exists 
$\sigma\in K$ and two different $\tilde{f}_{1},\tilde{f}_{2}\in F|_{\phi(\sigma)}$ such that $C\subset \phi(\sigma)$ and $\tilde{f}_{i}|_{C}=f_{i}$, $i=1,2$. By Lemma
\ref{lema:VCcomobar} a), there exists a vertex $\phi_{0}(v)$ of $C$ such that
$\phi_{0}(v)\in \phi(\sigma)$. Then $cl(f_{i})(\phi_{0}(v))=\tilde{f}_{i}(\phi_{0}(v))$, $i=1,2$, so 
$cl(f_{1})$ and $cl(f_{2})$ take different values on $\phi_{0}(v)$.
\end{proof}
\begin{teo}[TISSP-TRIANGULATION THEOREM]\label{tissptoerema}Let $S\subset R^{m}$ be a closed and bounded definable set and let
$S_{1},\ldots,S_{k}$ be definable subsets of $S$. Then there exists a triangulation $(K,\phi)\in \Delta(S;S_{1},\ldots,S_{k})$ with both \IC\ and \VC($S_{1},\ldots,S_{k}$).
\end{teo}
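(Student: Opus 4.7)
The plan is to argue by induction on the ambient dimension $m$, following the structure of the proof of the Triangulation Theorem (8.2.9 of \cite{MR1633348}) but refining the induction step so that \IC\ and \VC\ are maintained simultaneously. The base case $m=1$ will be immediate: a cell decomposition of $R$ compatible with $S_1,\ldots,S_k$ gives a triangulation with $\phi=id$ satisfying both properties trivially.

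For the inductive step, I would assume the result for dimensions $\le m$ and consider $S\subset R^{m+1}$. Using Lemma \ref{lema:triangclausu} I can enlarge the family to include the closures $\overline{S_i}$. Let $\pi:R^{m+1}\to R^m$ be the projection on the first $m$ coordinates and $A=\pi(S)$. By cylindrical cell decomposition compatible with $S$ and each $S_i$, the set $S$ decomposes above the cells of an induced decomposition of $A$ as graphs and regions between graphs of definable functions, and these data define a closed multivalued function $F$ on $A$ (closedness via Lemma \ref{lema:multivacerrada}) satisfying condition $1)$ of fullness, with $S=A^F$ and each $S_i$ a union of sets in $\phi(K)^F$ for any triangulation of $A$ compatible with the induced cells. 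Applying the inductive hypothesis to $A$ with subsets consisting of those induced cells (together with $\pi(\overline{S_i})$ and the domains of the functions of $F$), I obtain a triangulation $(K,\phi)$ of $A$ with both \IC\ and \VC.

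The hard part will be condition $2)$ of fullness of $F$, which generally fails and whose standard cure --- the first barycentric subdivision --- would destroy \IC, as the remark after \ref{obs:icprimerasub} shows. I would resolve this by a second application of the inductive hypothesis, now with respect to the family $\{\phi(\sigma):\sigma\in K\}\cup\{\partial \phi(\sigma):\sigma\in K\}$, producing $(K_0,\phi_0)\in\Delta(A;\phi(\sigma))_{\sigma\in K}$ with \IC\ and \VC$(\partial \phi(\sigma):\sigma\in K)$. Lemma \ref{lema:sustitucion} then ensures that the restriction $F_0$ of $F$ to $(A,\phi_0(K_0))$ is full, and Lemma \ref{lema:modlemadries} lifts $(K_0,\phi_0)$ to a triangulation $(L,\psi)\in\Delta(S;C^l)_{C^l\in\phi_0(K_0)^{F_0}}$ of $S$ with \IC, automatically compatible with each $S_i$ by the construction of $F$.

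It remains to achieve \VC$(S_1,\ldots,S_k)$ for $(L,\psi)$. Here I would exploit Corollary \ref{obs:lemamodvanden}: every simplex of $L$ projects onto exactly one simplex of $K_0$ and its vertices are the explicit lifts $b_j,c_j$ described there. Combining this rigid structure with the \VC\ property of the base (applied to a rich enough family that includes the projections of the $\overline{S_i}$) and the fact that each $\overline{S_i}$ is a disjoint union of cells $C^l$, one checks that a vertex $\psi(v_j)\in \overline{S_i}$ forces the whole $\psi(\tau)$ into $\overline{S_i}$; if a residual refinement is still required, a final invocation of Lemma \ref{lema:VCcomobar} via the \IC-preserving subdivision device already established should suffice. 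The main technical difficulty throughout is the tension between \IC\ and \VC --- \IC\ is fragile under refinement while \VC\ usually calls for refinement --- and Lemma \ref{lema:sustitucion} is the key device that reconciles the two.
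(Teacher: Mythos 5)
Your overall architecture matches the paper's: induction on the ambient dimension, a multivalued function over a triangulation of the projection obtained from the inductive hypothesis, Lemma \ref{lema:sustitucion} to recover condition $2)$ of fullness without barycentric subdivision, and Lemma \ref{lema:modlemadries} to lift with \IC. But there is a genuine gap precisely at the step you leave vague: obtaining \VC($S_1,\ldots,S_k$) for the lifted triangulation $(L,\psi)$. Your claim that the rigid structure of the lift plus \VC\ in the base already ``forces'' $\psi(\tau)\subset\overline{S_i}$ whenever all vertices of $\tau$ map into $\overline{S_i}$ is false without a further modification of the multivalued function. The problem is the diagonal simplices of the prisms over a band $(f,g)_C$: by Corollary \ref{obs:lemamodvanden}.$2)$ such a simplex has all its vertices on the graphs of $cl(f)$ and $cl(g)$, so if both graphs lie in $\overline{S_i}$ while the open band does not, every vertex lands in $\overline{S_i}$ but the simplex does not. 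The paper exhibits exactly this failure in the picture following Theorem \ref{tissptoerema}. The paper's cure is the modification M3): insert the functions $\frac{\tilde f+\tilde g}{2}$ between successive pairs before lifting, prove the auxiliary property $(*)$ (graphs all of whose vertex values lie in $\overline{S_i}$ are contained in $\overline{S_i}$), and then derive a contradiction for a bad diagonal simplex by showing that one of its vertices falls into the set $D^{l}_{cil}$ over the frontier of $C$, which is shown to avoid $\overline{S_i}$ by a curve-selection argument; that last argument in turn needs the good direction $e_{m+1}$ for the boundary set $T=bd(S)\cup bd(S_1)\cup\cdots\cup bd(S_k)$, an ingredient your sketch never introduces. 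None of this is a ``residual refinement'': your fallback of invoking Lemma \ref{lema:VCcomobar} one more time would require a triangulation of $S$ itself (in $R^{m+1}$) with \IC\ and \VC($\partial\psi(\tau):\tau\in L$), i.e.\ an instance of the theorem in the dimension you are currently trying to prove, so it is circular; the inductive hypothesis only operates in the base $A\subset R^m$.

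Two smaller inaccuracies: the multivalued function produced by the cell decomposition is closed (via Lemma \ref{lema:multivacerrada}) but does not automatically satisfy $1)$ of fullness; the paper needs the modification M1), extending each $f\in F$ to all of $A$ by 8.2.2 of \cite{MR1633348} and re-triangulating $A$ compatibly with the sets $\pi(S\cap\Gamma(\tilde f))$ and $\pi(S_i\cap\Gamma(\tilde f))$, which is also what guarantees that $\phi_1(K_1)^{F_1}$ is compatible with $S_i$ and $\overline{S_i}$ after the extension. Also the base case of the induction is $m=0$, not $m=1$ (harmless). The essential missing idea, however, is M3) together with the good-direction/curve-selection claim; without it the \VC\ part of the induction step does not go through.
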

\begin{proof}By induction on $m$. The case $m=0$ is trivial. Suppose the theorem holds for a certain $m$ and let us prove it for $m+1$. Let $S\subset R^{m+1}$ be a closed and bounded definable set and let $S_1,\ldots,S_k$ be definable subsets of $S$. Consider the closed and bounded definable set 
$$T=bd(S) \cup
bd(S_{1}) \cup \cdots \cup bd(S_{k}).$$
So $dim(T)<m+1$ by Corollary 4.1.10 in \cite{MR1633348}. Therefore by Lemma 7.4.2 in \cite{MR1633348} and applying a certain linear automorphism we can assume that $e_{m+1}$ is a good direction for $T$. Observe that \IC\ and \VC\ are preserved by linear automorphisms.

Let $A=\pi(S)=\pi(T)$, which is a closed and bounded definable set in $R^{m}$. Since $e_{m+1}$ is a
good direction for $T$ and by Cell decomposition theorem, 3.2.11 in \cite{MR1633348} , the set
$T$ is the disjoint union of $\Gamma(f)$'s for finitely many definable functions $f$ on cells $A_h$ that form a partition of $A$. By inductive hypothesis there exists 
a triangulation $(K,\phi)$ compatible with the subsets $A_{h}$ with \IC\ and \VC($\pi(S_{i}):i=1,\ldots,k$).
The restrictions of the functions $f$ to the sets of $\phi(K)$ form a multivalued function $F$ on $(A,\phi(K))$ such that
$\Gamma(F)=T$. Since $T$ is closed and bounded, by Lemma
\ref{lema:multivacerrada} the multivalued function $F$ is closed.
However, $F$ may not be full. We achieve fullness with two modifications of the multivalued function $F$.
\begin{itemize}
\item[M1)]Since $F$ is closed each function $f\in F$ extends definably and continuously to the closure of its domain and then, by Lemma 8.2.2 in in \cite{MR1633348}, it extends definably and continuously to a function $\tilde{f}:A\rightarrow R$. Let $\tilde{T}$ be the union of the graphs $\Gamma(\tilde{f})$ for $f\in F$. By inductive hypothesis there exists a triangulation $$(K_1,\phi_1)\in \Delta(A;\phi(\sigma),\pi(S\cap \Gamma(\tilde{f})), \pi(S_i \cap \Gamma(\tilde{f})))_{\sigma\in K,f\in F,i=1,\ldots,k}$$ with \IC\ and \VC(${\partial \phi(\sigma)}:\sigma\in K$) . Observe that by Lemma \ref{lema:VCcomobar} b) the triangulation $(K_1,\phi_1)$ satisfies \VC($\pi(S_1),\ldots,\pi(S_k)$). Let $F_1$ be the multivalued function on $(A,\phi(K_1))$ obtained by the restrictions of the  extensions $\tilde{f}$, $f\in F$, to the the sets of $\phi(K_1)$. Since $\tilde{T}=\Gamma(F_1)$ and $\tilde{T}$ is closed and bounded then, by Lemma \ref{lema:multivacerrada}, $F_1$ is closed. As every function $f\in F$ has been extended, $F_1$ clearly satisfies $1)$  of fullness. Since $(K_1,\phi_1)$ is compatible with $\pi(S\cap \Gamma(\tilde{f})), \pi(S_i \cap \Gamma(\tilde{f}))$, $f\in F$, $i=1,\ldots,k$, then $\phi_1(K_1)^{F_1}$ is compatible with the sets $S_i$ and $\overline{S_i}$, $i=1,\ldots,k$. That is, $S_i$ and $\overline{S_i}$ are finite disjoint unions of sets of $\phi_1(K_1)^{F_1}$.

\item[M2)] By induction hypothesis there exists $(K_2,\phi_2)\in \Delta(A;\phi_1(\sigma))_{\sigma \in K_1}$ with \IC\ and \VC($\partial \phi_1(\sigma): \sigma\in K_1$). Let $F_2$ be the multivalued function on $(A,\phi(K_2))$ obtained by the restrictions of the functions in $F_1$ to the sets of $\phi_2(K_2)$. By Lemma \ref{lema:sustitucion} the multivalued function $F_2$ is full. Observe that by Lemma \ref{lema:VCcomobar} b) the triangulation $(K_2,\phi_2)$ satisfies \VC($\pi(S_1),\ldots,\pi(S_k)$).
\end{itemize}
Now we must make another modification to the multivalued function $F_2$ in order to achieve \IC\ and \VC($S_1,\ldots,S_k$).
\begin{itemize}
\item[M3)]Given $\tilde{C}\in \phi_2(K_2)$ and $\tilde{f},\tilde{g}\in F_2|_{\tilde{C}}$
successive with $\tilde{f}<\tilde{g}$, define the function
$\frac{\tilde{f}+\tilde{g}}{2}$ as
$\frac{\tilde{f}+\tilde{g}}{2}(x)=\frac{\tilde{f}(x)+\tilde{g}(x)}{2}$,
$x\in \tilde{C}$. Let $F_3$ the multivalued function obtained by adding to $F_2$ the new functions $\frac{\tilde{f}+\tilde{g}}{2}$ for each pair of successive functions $\tilde{f},\tilde{g}\in F_2|_{\tilde{C}}$,
$\tilde{C}\in \phi_2(K_2)$,
$\tilde{f}<\tilde{g}$.
\end{itemize}
The new multivalued function $F_3$ on $\phi_2(K_2)$ is also full. Let us show that $F_3$ satisfies the following property
\begin{displaymath}(*)\left\{
\begin{array}{l}
\textrm{if $\tilde{f}'\in F_3|_{\tilde{C}}$, $\tilde{C} \in \phi_2(K_2)$, is such that for every vertex $v$ of $\tilde{C}$}\\
\textrm{we have that $(v,cl(\tilde{f}')(v))\in \overline{S}_{i}$ then $\Gamma(\tilde{f}')\subset \overline{S}_{i}$,}\\
\end{array}\right.
\end{displaymath}
which we will need below to prove that the lifting of $(K_2,\phi_2)$ satisfies
\VC($S_1,\ldots,S_k$). Let $\tilde{f}'\in
F_3|_{\tilde{C}}$, $\tilde{C} \in
\phi_2(K_2)$, be such that
$(v,cl(\tilde{f}')(v))\in \overline{S}_{i}$ for every vertex $v$
of $\tilde{C}$. Suppose first that $\tilde{f}'\in
F_2|_{\tilde{C}}$ and $\Gamma(\tilde{f}')\nsubseteq
\overline{S_{i}}$. Since $F_2$ is the restrictions of the functions of $F_1$ to the sets of $\phi_1(K_1)$, there exists $f\in F_1|_{C}$, $\tilde{C}\subset C$, $C\in \phi_1(K_1)$, such that $f|_{\tilde{C}}=\tilde{f}'$. In fact, as
$\Gamma(\tilde{f}')\nsubseteq \overline{S_{i}}$, we have that
$\Gamma(f)\nsubseteq \overline{S_{i}}$. Hence
$\Gamma(f)\subset \overline{S_{i}}^{c}$. Since $(K_2,\phi_2)$ has \VC($\partial \phi_1(\sigma): \sigma\in K_1$), by Lemma
\ref{lema:VCcomobar} a) there exists one vertex $v$ of $\tilde{C}$ such that $v\in C$. Therefore
$(v,cl(\tilde{f}')(v))=(v,f(v))\in \Gamma(f)$ does not lie in
$\overline{S_{i}}$, which is a contradiction. Suppose now that
$\tilde{f}'=\frac{\tilde{f}+\tilde{g}}{2}$, for some successive functions $\tilde{f},\tilde{g}\in F_2|_{\tilde{C}}$.
Then $\Gamma(\tilde{f}')\subset
(\tilde{f},\tilde{g})_{\tilde{C}}$. Since $F_2$ is the restrictions of the functions of $F_1$ to the sets of $\phi_1(K_1)$, there exists $f,g\in
F_1|_{C}$, $\tilde{C}\subset C$, $C\in
\phi_1(K_1)$, such that $f|_{\tilde{C}}=\tilde{f}$ y
$g|_{\tilde{C}}=\tilde{g}$. Suppose
$\Gamma(\tilde{f}')\nsubseteq \overline{S_{i}}$. Then
$(\tilde{f},\tilde{g})_{\tilde{C}}\nsubseteq \overline{S_{i}}$ and therefore $(f,g)_{C}\nsubseteq \overline{S_{i}}$. Hence
$(f,g)_{C}\subset \overline{S_{i}}^{c}$. By Lemma
\ref{lema:VCcomobar} a), there exists one vertex $v$ of $\tilde{C}$ such that $v\in C$. Then
$(v,cl(\tilde{f}')(v))=(v,\frac{cl(\tilde{f})+cl(\tilde{g})}{2}(v))=(v,\frac{f+g}{2}(v))\in
(f,g)_{C}$ does not lie in $\overline{S_{i}}$, which is a contradiction.

By Lemma \ref{lema:modlemadries}, we can lift $(K_2,\phi_2)$ to $(L_{0} ,\psi_{0})\in \Delta(A^{F_3};C^l)_{C^l \in \phi_2(K_2)^{F_3}}$ with \IC. Finally, let $L=\{\sigma \in L_{0}:\psi_{0}(\sigma)
\subset S\}$. Then clearly $(L,\psi)\in \Delta(S;S_1,\ldots,S_k)$ with \IC, where $\psi=\psi_{0}|_{|L|}$.

We finish the proof by checking that $(L,\psi)$ has \VC($S_1,\ldots,S_k$). Let $\tau=(v_{0},\ldots,v_{n})\in L$ be such that $\psi(v_{r})\in
\overline{S_{i}}$ for all $r=0,\ldots,n$. By construction of the lifting in the proof of Lemma \ref{lema:modlemadries} we have that
$\psi(\tau)\subset C^{l}$, for some $C^{l}\in
\phi_2(K_2)^{F_3}$,
$C=\pi(C^{l})\in \phi_2(K_2)$. It follows from Corollary
\ref{obs:lemamodvanden}.$1)$ that
$\pi(\psi(\tau))=\pi(C^{l})=C\in \phi_2(K_2)$. First let us prove that $C\subset \pi(\overline{S_{i}})$. Because $\pi(\psi(v_{r}))\in \pi(\overline{S_{i}})=\overline{\pi(S_{i})}$
for all $r=0,\ldots,n$, $\pi(\psi(v_{r}))$  are the vertices of $C$ and $(K_2,\phi_2)$ satisfies \VC($\pi(S_{1}),\ldots,\pi(S_{k}))$, we have $C\subset
\overline{\pi(S_{i})}=\pi(\overline{S_{i}})$. 
We show now that
$\psi(\tau)\subset \overline{S_{i}}$. Consider the case that
$C^{l}$ is the graph of a function of $F_3$. Then, by Corollary \ref{obs:lemamodvanden}.$3)$, $C^{l}=\psi(\tau)$. It follows by $(*)$ that
$C^{l}\subset \overline{S_{i}}$. Now consider the case that $C^{l}=(f,g)_{C}$ for some functions $f,g\in
F_3|_{C}$ successive and suppose that $C^{l}\nsubseteq
\overline{S_{i}}$. Then $C^{l}\subset
\overline{S_{i}}^{c}$. By definition of $F_3$ we can assume that there exists $f_{1},g_{1}\in
F_2|_{C}$ successive such that $f=f_{1}$ and
$g=\frac{f_{1}|_{C}+g_{1}|_{C}}{2}$. Therefore $C^{l}\subset
D^{l}:=(f_{1},g_{1})_{C}$. Since $C^{l}\subset
\overline{S_{i}}^{c}$ then $D^{l}\subset
\overline{S_{i}}^{c}$.
\\

\emph{Claim. The set $$D^{l}_{cil}=\{(x,y):x\in
\overline{C}-C,cl(f_{1})(x)\neq cl(g_{1})(x), cl(f_{1})(x)<y<cl(g_{1})(x)
\}$$ is contained in $\overline{S_{i}}^{c}$.}
\\

Once we have proved the Claim, by Corollary
\ref{obs:lemamodvanden} $2)$, and following its notation, we have two cases:

\begin{enumerate}
\item[a)]$\tau$ is a $n$-simplex
$(b_{0},\ldots,b_{j-1},c_{j},\ldots,c_{n})$,$b_{j}\neq c_{j}$,
where $\phi_2^{-1}(C)=(a_{0},\ldots,a_{n})$,  or

\item[b)]$\tau$ is a $n$-simplex
$(b_{0},\ldots,b_{j},c_{j},\ldots,c_{n-1})$,$b_{j}\neq c_{j}$,
where $\phi_2^{-1}(C)=(a_{0},\ldots,a_{n-1})$.
\end{enumerate}

In any case, since $b_{j}\neq c_{j}$,
we have that $cl(f)(\phi_2(a_{j}))\neq
cl(g)(\phi_2(a_{j}))$ \linebreak and therefore
$cl(f_{1})(\phi_2(a_{j}))\neq
cl(g_{1})(\phi_2(a_{j}))$.  Hence it follows from \linebreak
$cl(g)(\phi_2(a_{j}))=(\frac{cl(f_{1})|_{C}+cl(g_{1})|_{C}}{2})(\phi_2(a_{j}))$ that
$$cl(f_{1})(\phi_2(a_{j}))<cl(g)(\phi_2(a_{j}))<cl(g_{1})(\phi_2(a_{j}))$$ and then $\psi(c_{j})=(\phi_2(a_{j}),cl(g)(\phi_2(a_{j})))\in D_{cil}^{l}$. This implies that $\psi(c_{j})\notin \overline{S_{i}}$, which is a contradiction. So then $\psi(\tau)\subset C^l \subset \overline{S_i}$ as required.

Finally we proof the claim.
\\
\hspace{-0.3cm}\emph{Proof of the Claim. }Suppose there exists $(x_{0},y_{0})\in D^{l}_{cil}$ such that $(x_{0},y_{0})\in
\overline{S_{i}}$. Since $\phi_2(K_2)^{F_2}$ is compatible with $\overline{S_i}$, for all $y\in
(cl(f_{1})(x_{0}),cl(g_{1})(x_{0}))$ we have that $(x_{0},y)\in
\overline{S_{i}}$. We show that if $y\in
(cl(f_{1})(x_{0}),cl(g_{1})(x_{0}))$ then $(x_{0},y)\in bd(S_{i})$.
Since $x_{0} \in \overline{C}- C$, by the Curve selection lemma, 6.1.5 in \cite{MR1633348}, there exists a curve $\gamma(t)$, $t\in (0,1)$, such that $\lim_{t \to 1}\gamma(t)=x_0$ and $\gamma(t)\in C$, $\forall t\in
(0,1)$. Consider the curve
$$\gamma_{y}(t)=(\gamma(t),f_{1}(\gamma(t))+\left( g_{1}(\gamma(t))-f_{1}(\gamma(t))\right) \left( \frac{y-f_{1}(x_{0})}{g_{1}(x_{0})-f_{1}(x_{0})} \right)), t\in (0,1).$$

Observe that $\gamma_{y}(t)\in D^{l}$, $t\in (0,1)$, and 
$\lim_{t \to 1}\gamma_y(t)=(x_0,y)$. Therefore $(x_{0},y) \in
\overline{D^{l}}\subset \overline{\overline{S_{i}}^{c}} \subset
\overline{int(S_{i}) ^{c}}=int({S_{i}})^{c}$. It follows that $(x_{0},y)\in bd(S_{i})$.

We have shown that $(x_{0},y)\in bd(S_{i})$ for all $y\in (cl(f_{1})(x_{0}),cl(g_{1})(x_{0}))$, which is a contradiction because $e_{m+1}$ is a good direction for $T$.
\end{proof}
\begin{obs}Following the notation of the proof of Theorem \ref{tissptoerema},
we show an example that explains the modification M3). Let $S$ be the following closed and bounded 2-dimensional definable set, where the union of the curves in its interior is the subset $S_1$.
%\definecolor{rojo}{rgb}{1, .7, .7}
\setlength{\unitlength}{0.75cm}
\begin{center}
\begin{picture}(8,7)
%izquierda el complejo K

\put(0.5,3){\line(1,1){2}} \put(0.5,3){\line(1,-1){2}}
\put(2.5,1){\line(1,0){4}} \put(6.5,5){\line(1,-1){2}}
\put(8.5,3){\line(-1,-1){2}} \qbezier(2.5,5)(4.5,7)(6.5,5)

%\put(9.5,3){\circle*{0.1}} \put(11.6,3){\circle*{0.1}}

%\put(7.2,2.6){\makebox(0,0){$\phi(v_{0})$}}
%\put(9.7,2.6){\makebox(0,0){$\phi(v_{1})$}}
%\put(12.5,1.6){\makebox(0,0){$\phi(v_{2})$}}
%\color{rojo}
\qbezier(2.5,5)(3.5,4.5)(4.5,5) \qbezier(4.5,5)(5.5,5.5)(6.5,5)
\qbezier(2.5,1)(4.5,3)(6.5,1)
\end{picture}
\end{center}
If we follow the proof of TISSP-triangulation theorem without the modification M3) then we obtain this triangulation
%\definecolor{dunkelblau}{rgb}{0, 0, .7}
\begin{center}
\begin{picture}(8,7)
\qbezier(2.5,5)(4.5,7)(6.5,5) %\color{dunkelblau}
\put(0.5,3){\line(1,1){2}} \put(0.5,3){\line(1,-1){2}}
\put(2.5,1){\line(1,0){4}} \qbezier(2.5,5)(3.5,4.5)(4.5,5)
\qbezier(4.5,5)(5.5,5.5)(6.5,5) \put(6.5,5){\line(1,-1){2}}
\put(8.5,3){\line(-1,-1){2}} \qbezier(2.5,1)(4.5,3)(6.5,1)
\put(2.5,5){\line(1,0){4}} \put(2.5,5){\line(0,-1){4}}
\put(6.5,5){\line(0,-1){4}} \put(1.5,2){\line(0,1){2}}
\put(1.5,2){\line(1,3){1}} \put(7.5,2){\line(0,1){2}}
\put(6.5,1){\line(1,3){1}} \put(4.5,1){\line(0,1){5}}
\put(4.5,2){\line(2,3){2}} \put(2.5,1){\line(1,2){2}}
\put(2.5,5){\line(2,1){2}} \put(4.5,6){\line(2,-1){2}}
\put(2.5,1){\line(2,1){2}} \put(4.5,2){\line(2,-1){2}}
\qbezier(2.5,5)(3.5,4.5)(4.5,5)
\qbezier(4.5,5)(5.5,5.5)(6.5,5) \qbezier(2.5,1)(4.5,3)(6.5,1)
\end{picture}
\end{center}
which does not satisfy \VC($S_1$).
\end{obs}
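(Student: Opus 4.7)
The plan is to trace the TISSP construction on this specific $(S,S_1)$ with M3) omitted, identify an explicit $2$-simplex of the resulting lift whose three $\psi$-vertices sit in $\overline{S_1}=S_1$ but whose $\psi$-image is a genuine $2$-dimensional subset of $S$ exiting $\overline{S_1}$, and thereby witness the failure of \VC($S_1$) directly.

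First I would follow the proof of Theorem~\ref{tissptoerema}. Since $e_2$ is a good direction for $T=bd(S)\cup bd(S_1)$, the projection $A=\pi(S)\subset R$ is an interval, and after M1) and M2) we obtain a triangulation $(K_2,\phi_2)$ of $A$ and a full multivalued function $F_2$ on $(A,\phi_2(K_2))$ whose graphs together recover $T$. On the cell $C$ with $\phi_2(C)=(2.5,4.5)$, the two successive elements $f<g$ of $F_2|_C$ that matter are the one tracing the lower arc of $S_1$ (so $\textrm{cl}(f)(2.5)=1$, $\textrm{cl}(f)(4.5)=2$, since the Bezier $(2.5,1)(4.5,3)(6.5,1)$ passes through $(4.5,2)$) and the one tracing the left half of the upper arc of $S_1$ (so $\textrm{cl}(g)(2.5)=5$, $\textrm{cl}(g)(4.5)=5$); nothing from $T$ interleaves between them on the strip $x\in(2.5,4.5)$, which is the visual content of the first picture.

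Next, skipping M3) and applying Lemma~\ref{lema:modlemadries} directly, Corollary~\ref{obs:lemamodvanden}.$2)$ produces over $C$ and this pair the $2$-simplex $\tau=(b_0,b_1,c_1)$ of the lift, with $\psi$-vertices $(2.5,1)$, $(4.5,2)$, $(4.5,5)$; this is exactly the triangle whose three edges $(2.5,1)$--$(4.5,2)$, $(4.5,2)$--$(4.5,5)$ and $(2.5,1)$--$(4.5,5)$ are drawn in the displayed subdivision. All three vertices lie on $S_1$. However $\pi\circ\psi$ maps $\tau$ surjectively onto $[2.5,4.5]$, and over the midpoint $x=3.5$ the fibre of $\psi(\tau)$ is the non-degenerate interval between $\textrm{cl}(f)(3.5)\approx 1.75$ (the lower arc value) and the value at $x=3.5$ of the $\psi$-image of the diagonal edge $(b_0,c_1)$. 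Hence $\psi(\tau)$ is genuinely $2$-dimensional and contains points such as $(3.5,2.5)$ that sit strictly between the two arcs of $S_1$; since $\overline{S_1}=S_1$ is $1$-dimensional, $\psi(\tau)\not\subset\overline{S_1}$, which is the desired violation of \VC($S_1$).

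The main subtlety lies in the first step: verifying from the picture that on the strip $x\in(2.5,4.5)$ the lower and upper arcs of $S_1$ really do appear as consecutive elements of $F_2|_C$, i.e.\ that no other graph from $T$ slips between them. Once this is granted, the pathological $\tau$ above immediately violates \VC($S_1$). This also makes transparent the role of M3): inserting the midpoint function $(f+g)/2$ splits the band $(f,g)_C$ into two sub-bands whose bounding pairs are never simultaneously on $\overline{S_1}$, so by property $(*)$ in the proof of Theorem~\ref{tissptoerema} no analogous pathological $2$-simplex can then appear and \VC($S_1$) is restored.
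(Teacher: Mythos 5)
Correct, and essentially the paper's own argument: the paper gives no formal proof, only the two pictures, and the offending triangle you exhibit (the one over the strip $x\in(2.5,4.5)$ with $\psi$-vertices $(2.5,1)$, $(4.5,2)$, $(4.5,5)$, all in $\overline{S_1}$ but with $2$-dimensional image inside the open band between the two arcs) is precisely the simplex visible in the paper's second figure that witnesses the failure of \VC($S_1$). Your closing remark on how inserting $(f+g)/2$ in M3) restores property $(*)$ is also exactly the intended moral of the observation.
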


%%%%%%%%%%%%%%%%%%%%%%%%%
       %    %
       %    %
       %%%%%%
            %
            %
%%%%%%%%%%%%%%%%%%%%%%%%% 
\section{NT-triangulations}\label{section:4}

In this section we will prove the NT-triangulation theorem. The following are two well known results that we will use in the proof of the NT-triangulation theorem.
\begin{lema}\label{lema:complejo}Let $K$ be a finite collection of open simplices such that
\begin{enumerate}
\item[i)]every face of a simplex of $K$ is in $K$,

\item[ii)]every pair of distinct simplices of $K$ are disjoint.
\end{enumerate}
Then $K$ is closed simplicial complex.
\end{lema}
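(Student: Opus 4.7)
The plan is to verify the only axiom of a (closed) simplicial complex not already built into the hypotheses, namely that the closures of any two simplices of $K$ meet in the closure of a common face. Closure under faces is exactly hypothesis (i), and once the intersection property is established, $K$ is a simplicial complex; since every face of every simplex belongs to $K$, the realization $|K|$ is closed, so $K$ is a \emph{closed} simplicial complex.

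For the intersection property, I would fix $\sigma,\tau\in K$ and use the standard decomposition of a closed simplex as the disjoint union of its open faces: $\overline{\sigma}=\dot{\bigcup}_{\rho\preceq\sigma}\rho$ and similarly for $\tau$. By (i), all the faces $\rho\preceq\sigma$ and $\rho'\preceq\tau$ lie in $K$, so
\[
\overline{\sigma}\cap\overline{\tau}=\bigcup_{\rho\preceq\sigma,\ \rho'\preceq\tau}(\rho\cap\rho'),
\]
and by (ii) each nonempty term in this union forces $\rho=\rho'$. Hence $\overline{\sigma}\cap\overline{\tau}$ is the union of the open simplices of $K$ that are simultaneously faces of $\sigma$ and of $\tau$.

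Finally, to recognize this union as the closure of a single simplex, I would note that the common faces of $\sigma$ and $\tau$ are exactly the simplices spanned by nonempty subsets of $V(\sigma)\cap V(\tau)$. If $V(\sigma)\cap V(\tau)=\emptyset$, then $\overline{\sigma}\cap\overline{\tau}=\emptyset$ and we are done; otherwise the common faces are precisely the faces of the unique simplex $\rho_{0}$ spanned by $V(\sigma)\cap V(\tau)$, and their disjoint union is $\overline{\rho_{0}}$. By (i), $\rho_{0}\in K$, giving the required conclusion $\overline{\sigma}\cap\overline{\tau}=\overline{\rho_{0}}$.

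The only real bookkeeping point, rather than a genuine obstacle, is to make sure that the $0$-simplices of $K$ really are the vertices of its closed realization, so that $V(\sigma)\cap V(\tau)$ is unambiguous; but this is immediate from (i) (each vertex of $\sigma$ is a face of $\sigma$ and so belongs to $K$) together with (ii) (distinct $0$-simplices are different points).
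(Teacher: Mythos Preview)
Your argument is correct and follows essentially the same route as the paper's proof: both reduce the intersection property to the observation that, via the decomposition $\overline{\sigma}=\dot{\bigcup}_{\rho\preceq\sigma}\rho$, hypothesis (ii) forces any open face of $\sigma$ meeting an open face of $\tau$ to coincide with it, so that $\overline{\sigma}\cap\overline{\tau}$ is the closure of the simplex spanned by $V(\sigma)\cap V(\tau)$. The paper phrases this pointwise (pick $x\in\overline{\sigma}\cap\overline{\tau}$, locate the open faces $s\preceq\sigma$, $t\preceq\tau$ containing it, and conclude $s=t$), while you package the same computation as a global decomposition, but the content is identical.
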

\begin{proof}It is essentially that of Lemma 1.2.1 in \cite{MR755006}, except that we work with open simplices. Let $\sigma$ and $\tau$ two different simplices of $K$.
We show that if $\overline{\sigma} \cap \overline{\tau}\neq
\emptyset$ then it equals the face $\sigma'$ of $\sigma$ that is spanned by those vertices $v_0,\ldots,v_m$ of $\sigma$ that are also vertices of $\tau$. As $\overline{\sigma}
\cap \overline{\tau}$ is convex and contains the vertices
$v_{0},\ldots,v_{m}$ we have that $\sigma' \subset
\overline{\sigma} \cap \overline{\tau}$. To prove the reverse inclusion, suppose $x\in
\overline{\sigma} \cap \overline{\tau}$. Then $x
\in s\cap t$, for some faces $s$ of $\sigma$ and $t$ of $\tau$. Because of $i)$ the simplices $s$ and $t$ are in $K$. As $s\cap t\neq \emptyset$ it follows from $ii)$
that $s=t$. Therefore the vertices of $s$ are also vertices of $\tau$, so that by definition, they are elements of the set $\{v_{0},\ldots,v_{m}\}$. Then $s$ is a face of $\sigma'$, so that $x\in \sigma'$.
\end{proof}
\begin{lema}\label{lema:contracsimpli}Let $\sigma$ be a $n$-simplex and $x\in \sigma$.
Then there exists a semialgebraic function
$$h:\overline{\sigma}\setminus \{x\}\rightarrow \partial \sigma$$
such that $h|_{\partial \sigma} =id$.
\end{lema}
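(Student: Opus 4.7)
The plan is to define $h$ as the radial projection from $x$ onto the boundary of $\sigma$. Write $\sigma=(v_0,\ldots,v_n)$, so that every point of $\overline{\sigma}$ has unique barycentric coordinates $(\lambda_0,\ldots,\lambda_n)$ with $\lambda_i\geq 0$ and $\sum_i\lambda_i=1$, with $\partial\sigma$ being the locus where at least one $\lambda_i$ vanishes. Since $x\in\sigma$ is an interior point, writing $x=\sum_i\xi_iv_i$ we have $\xi_i>0$ for all $i$. Geometrically, $x$ is in the interior of the compact convex set $\overline{\sigma}$, so for every $y\in\overline{\sigma}\setminus\{x\}$ the ray from $x$ through $y$ meets $\overline{\sigma}$ in a closed segment $[x,z]$ with $z\in\partial\sigma$, and I set $h(y)=z$.

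To make this explicit and semialgebraic, for $y=\sum_i\mu_iv_i\in\overline{\sigma}\setminus\{x\}$ put
$$t(y)=\min\Bigl\{\tfrac{\xi_i}{\xi_i-\mu_i}\,:\,0\leq i\leq n,\ \mu_i<\xi_i\Bigr\}$$
and define $h(y)=x+t(y)(y-x)$. The set over which the minimum is taken is nonempty because $y\neq x$ forces at least one coordinate $\mu_i$ to be strictly less than $\xi_i$ (otherwise equality of all coordinates would give $y=x$). The $i$-th barycentric coordinate of $h(y)$ is $\xi_i+t(y)(\mu_i-\xi_i)$; by construction $t(y)$ is the largest scalar keeping all these coordinates nonnegative, so $h(y)\in\overline{\sigma}$, and at any index $i_0$ achieving the minimum the $i_0$-th coordinate vanishes, placing $h(y)$ in $\partial\sigma$.

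Next I would verify the remaining properties. Semialgebraicity of $h$ is immediate from its definition as a minimum of finitely many quotients of affine functions; continuity follows by the same argument, noting that on the definable piece where $\mu_i<\xi_i$ the denominator $\xi_i-\mu_i$ stays bounded away from $0$, and the minimum of continuous functions is continuous. For the identity condition on the boundary: if $y\in\partial\sigma$ then $\mu_j=0$ for some $j$, and since $\xi_j>0$ the candidate value at index $j$ is $\xi_j/(\xi_j-0)=1$, giving $t(y)\leq 1$; conversely $y=x+1\cdot(y-x)\in\overline{\sigma}$ forces $t(y)\geq 1$, so $t(y)=1$ and $h(y)=y$.

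No step presents a real obstacle; the only delicate point is bookkeeping the case analysis in the definition of $t(y)$ to ensure the minimum is taken over a nonempty index set and that the resulting map is continuous at points of $\partial\sigma$. The whole argument is linear-algebraic and standard, and in fact produces a semialgebraic strong deformation retraction onto $\partial\sigma$, though only the retraction itself is asserted in the statement.
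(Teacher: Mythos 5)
Your map is exactly the paper's: the radial projection from the interior point $x$ onto $\partial\sigma$ (the paper defines it segment-wise, collapsing each $(x,y]$, $y\in\partial\sigma$, to $y$, and settles continuity by citing Corollary 6.1.13(ii) of van den Dries). Your explicit barycentric formula, the check that the minimum is over a nonempty index set, that $h(y)\in\partial\sigma$, and that $h|_{\partial\sigma}=\mathrm{id}$ are all correct, and semialgebraicity is clear from the formula. The one step that does not work as written is your continuity argument for $t$: on the piece $\{\mu_i<\xi_i\}$ the denominator $\xi_i-\mu_i$ is \emph{not} bounded away from $0$ (it tends to $0$ as $y$ approaches the locus $\mu_i=\xi_i$), and since the index set over which the minimum is taken varies with $y$, "a minimum of continuous functions is continuous" does not apply directly; moreover the delicate points are those where some $\mu_i=\xi_i$, which need not lie on $\partial\sigma$. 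The gap is real but easily closed: either note that when an index enters the competition its quotient tends to $+\infty$ and so cannot pull the minimum below the locally bounded values coming from the indices already satisfying $\mu_i<\xi_i$ at the limit point, or, more cleanly, rewrite $1/t(y)=\max_{0\le i\le n}\,(\xi_i-\mu_i)/\xi_i$, a maximum of finitely many continuous functions of $y$ which is strictly positive for $y\neq x$ (its $\xi$-weighted average is $\sum_i(\xi_i-\mu_i)=0$, so some term is positive), making $t$, and hence $h$, manifestly continuous; alternatively you can simply invoke the o-minimal continuity criterion as the paper does.
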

\begin{proof}Let $y\in \partial \sigma$ and
$(x,y]=\{(1-t)x+ty:t\in (0,1]\}$. Define the semialgebraic function $h_{y}:(x,y]\rightarrow \partial \sigma $ such that
$h_{y}((1-t)x+ty)=y$, $t\in(0,1]$. It is enough to consider the semialgebraic function
 $h:\overline{\sigma}\rightarrow
\partial \sigma$ such that $h|_{(x,y]}=h_{y}$. It remains to check that $h$ is indeed continuous, which follows from Corollary 6.1.13 ii) in \cite{MR1633348}.
\end{proof}
\begin{teo}[NT-TRIANGULATION THEOREM]\label{teo:NTteorema}Let $K$ be a closed simplicial complex and let $S_{1},\ldots,S_{k}$ be definable subsets of $|K|$. Then there exist a triangulation $(K',\phi')\in \Delta^{NT}(|K|;S_{1},\ldots,S_{k})$.
\end{teo}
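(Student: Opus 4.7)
My plan is to use the TISSP-Triangulation Theorem as a black box that produces a triangulation with enough rigidity that one can ``straighten'' it and recover a subdivision of $K$. First, I would apply Theorem \ref{tissptoerema} to $|K|$ with the collection of definable subsets $S_{1},\ldots,S_{k}$ together with all the open simplices $\sigma\in K$, to obtain a triangulation $(L,\psi)\in \Delta(|K|;S_{1},\ldots,S_{k},\sigma)_{\sigma\in K}$ satisfying \IC\ and \VC$(S_{1},\ldots,S_{k},\sigma)_{\sigma\in K}$. By \IC, for each $\tau=(v_{0},\ldots,v_{n})\in L$ the points $\psi(v_{0}),\ldots,\psi(v_{n})$ are affinely independent and span a simplex $\tau^{\psi}$ in $R^{m}$, and distinct simplices of $L$ produce disjoint straightenings. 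Using Lemma \ref{lema:complejo} the collection $K':=\{\tau^{\psi}:\tau\in L\}$ is then a closed simplicial complex in $R^{m}$, and the ``straightening'' map $\tilde\psi:|L|\to |K'|$ defined piecewise by sending barycentric coordinates with respect to the $v_{i}$ to the same barycentric coordinates with respect to the $\psi(v_{i})$ is a definable homeomorphism.

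The next step is to verify that each simplex of $K'$ sits inside a simplex of $K$. Given $\tau\in L$ with $\psi(\tau)\subset \sigma$ for $\sigma\in K$, compatibility of $(L,\psi)$ with the open simplices of $K$ forces at least one vertex $\psi(v_{i})$ to lie in $\sigma$ itself --- otherwise all vertices $\psi(v_{i})$ would belong to a common proper face of $\sigma$ and \VC\ applied to that face would push $\psi(\tau)$ into $\partial\sigma$, contradicting $\psi(\tau)\subset \sigma$. Since the remaining $\psi(v_{j})$ lie in $\overline{\sigma}$ and $\overline{\sigma}$ is convex, the open simplex $\tau^{\psi}$ lies entirely in $\sigma$. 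In particular, $|K'|\subset|K|$ and the tentative map $\phi':=\psi\circ \tilde\psi^{-1}:|K'|\to|K|$ will send any $\tau^{\psi}\subset\sigma$ to $\psi(\tau)\subset\sigma$, which is precisely condition iii) of Definition \ref{def:normal}, while condition i) follows from the compatibility already built into $(L,\psi)$.

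The main obstacle, and the step that forces us to invoke o-minimal homology, is proving condition ii): that $K'$ is actually a subdivision of $K$, i.e.\ $|K'|=|K|$. I would prove this by induction on $\dim\sigma$ for the closed subcomplexes $L_{\overline\sigma}:=\{\tau\in L:\psi(\tau)\subset \overline\sigma\}$ and $K'_{\overline\sigma}:=\{\tau^{\psi}:\tau\in L_{\overline\sigma}\}$, showing that $|K'_{\overline\sigma}|=\overline\sigma$. The base case (vertices) is immediate because compatibility with each $0$-simplex of $K$ forces a unique preimage vertex. In the inductive step, the straightening provides a continuous injection $\tilde\psi:|L_{\overline\sigma}|\to \overline\sigma$ which by the inductive hypothesis is already surjective (indeed the identity after identification) on the boundary $\partial\sigma$. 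If some $x\in\sigma$ failed to lie in the image, then Lemma \ref{lema:contracsimpli} would yield a semialgebraic retraction $h:\overline\sigma\setminus\{x\}\to \partial\sigma$, and composing $h\circ\tilde\psi$ with the homeomorphism $\psi^{-1}:\overline\sigma\to |L_{\overline\sigma}|$ would produce a definable retraction of the closed simplex $\overline\sigma$ onto its boundary $\partial\sigma$, contradicting the o-minimal analogue of the non-retraction theorem (the $(n-1)$-st o-minimal homology of $\partial\sigma$ is non-trivial while that of $\overline\sigma$ vanishes). This contradiction finishes the induction, giving $|K'|=|K|$, and with it the three defining properties of a NT-triangulation.
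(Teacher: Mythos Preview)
Your approach is essentially the paper's: apply the TISSP-triangulation theorem, use \IC\ to form $K'=\{\tau^{\psi}:\tau\in L\}$ and the simplicial isomorphism $\tilde\psi$, set $\phi'=\psi\circ\tilde\psi^{-1}$, and prove $|K'|=|K|$ by induction on $\dim\sigma$ using o-minimal homology. Two points in the details are not correct as written.

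\medskip
\textbf{(1) The containment $\tau^{\psi}\subset\sigma$.} Your claim ``otherwise all vertices $\psi(v_i)$ would belong to a common proper face of $\sigma$'' is false: for a $2$-simplex $\sigma=(a,b,c)$ one may have $\psi(v_0)=a$, $\psi(v_1)=b$, $\psi(v_2)=c$, and these lie in no common proper face. Consequently the intermediate statement ``at least one $\psi(v_i)\in\sigma$'' need not hold either. The conclusion $\tau^{\psi}\subset\sigma$ is nevertheless true, but the argument must split into two cases: if all $\psi(v_i)$ lie in the closure of a single proper face $\sigma'$ of $\sigma$, then \VC$(\sigma')$ forces $\psi(\tau)\subset\overline{\sigma'}\subset\partial\sigma$, a contradiction; if not, then for every vertex $e_j$ of $\sigma$ some $\psi(v_i)$ has positive $e_j$-barycentric coordinate, and hence every point of the open simplex $\tau^{\psi}$ has all barycentric coordinates positive, i.e.\ $\tau^{\psi}\subset\sigma$. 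The paper avoids \VC\ here altogether: it only records $\tau^{\psi}\subset\overline\sigma$ and proves $\tau^{\psi}\subset\sigma$ inside the induction, using that the $\tau^{\psi}$'s are pairwise disjoint (part ii) of \IC) together with the inductive description of $\partial\sigma$.

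\medskip
\textbf{(2) The ``retraction''.} The composite $h\circ\tilde\psi\circ\psi^{-1}:\overline\sigma\to\partial\sigma$ is not a retraction: its restriction to $\partial\sigma$ equals $\tilde\psi\circ\psi^{-1}|_{\partial\sigma}$, which is a definable homeomorphism of $\partial\sigma$ but not the identity. The homology contradiction still goes through, and this is exactly how the paper argues: the restriction to $\partial\sigma$ is a homeomorphism, hence induces an isomorphism on $\tilde H_{n}(\partial\sigma)^{\mathcal R}\neq 0$, yet the map factors through $\overline\sigma$ (or $\overline\sigma\setminus\{x\}$) with $\tilde H_{n}(\overline\sigma)^{\mathcal R}=0$, so the induced map is zero --- contradiction. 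Just replace ``retraction'' by this factorisation argument.
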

\begin{proof}By Theorem \ref{tissptoerema} there exist $(K_{0},\phi_{0})\in \Delta(|K|;S_1,\ldots,S_k,\sigma)_{\sigma\in K}$ satisfying \IC. Consider the collection of simplices $K'=\{\tau^{\phi_{0}}:\tau\in K_{0}\}$ (with the notation of Definition  \ref{def:IC}). Because of Lemma \ref{lema:complejo} and the fact that $K_0$ satisfy \IC,
we have that $K'$ is a closed simplicial complex. The map between the set of vertices
$$\begin{array}{rcl}
g_{vert}:Vert(K_{0}) & \rightarrow  & Vert(K')\\
v & \mapsto & \phi_0(v)\end{array}$$
induce a simplicial isomorphism $g:|K_{0}|\rightarrow |K'|$.
Observe that by definition given $\tau\in K_{0}$ we have that
$g(\tau)=\tau^{\phi_{0}}$. We also observe that given $\tau\in
K_{0}$ if $\phi_{0}(\tau)\subset \sigma \in K$
then the images of the vertices of $\tau$ by $\phi_{0}$ lie in $\overline{\sigma}$ and therefore $g(\tau)=\tau^{\phi_{0}}\subset \overline{\sigma}$. In
particular, given $\sigma\in K$ there exist
$\tau_{1},\ldots,\tau_{m}\in K_{0}$ such that
$\sigma=\phi_{0}(\tau_{1})\dot{\cup} \cdots \dot{\cup}
\phi_{0}(\tau_{m})$ and then 
\begin{eqnarray*}
g(\tau_{1})\dot{\cup} \cdots \dot{\cup}
g(\tau_{m})\subset\overline{\sigma}.
\end{eqnarray*}
\\
\emph{Claim: $g(\tau_{1})\dot{\cup} \cdots
\dot{\cup} g(\tau_{m})=\sigma.$}
\\

Once we have proved the Claim, we can assure that
\begin{enumerate}
\item[a)]$K'$ is a subdivision of $K$,

\item[b)]for every $\tau \in K_{0}$ and every $\sigma \in K$ we have that
$$\phi_{0}(\tau)\subset \sigma \Leftrightarrow g(\tau)\subset
\sigma.$$
%Let us prove the left to right implication. Sean $\tau\in K_{0}$ y $\sigma\in K$ tales que $g(\tau)\subset \sigma$. Si %$\phi_{0}(\tau) \nsubseteq \sigma$ entonces, puesto que $(K_{0},\phi_{0})$ es compatible con los s�mplices de $K$, %tenemos que $\phi_{0}(\tau)\subset \sigma'$, donde $\sigma'$ es un s�mplice de $K$ distinto de $\sigma$. Por el %\emph{Aserto} tenemos que $g(\tau)\subset \sigma'$, lo cual es una contradicci�n.
\end{enumerate}
Finally we define the function
$$\phi':=\phi_{0}\circ g^{-1}:|K'| \rightarrow |K|.$$
Observe that $(K',\phi')\in \Delta(|K|;S_1,\ldots,S_k,\sigma)_{\sigma\in K}$ and that by $b)$ given a simplex $\tau^{\phi_{0}}\in K'$ if $\tau^{\phi_{0}}\subset \sigma \in K$ then
$\phi'(\tau^{\phi_{0}})=\phi_{0}\circ
g^{-1}(\tau^{\phi_{0}})=\phi_{0}(\tau)\subset \sigma$. Hence the three conditions of NT-triangulations are satisfied. It remains to prove the claim.
\\
\\
\emph{Proof of the Claim.} By induction on the dimension $n$ of the simplex $\sigma\in K$, the case $n=0$ being trivial.  Suppose the claim holds for a some $n$ and let us prove it for $n+1$. Let $\sigma\in K$ be a $(n+1)$-simplex and some
$\tau_{1},\ldots,\tau_{m}\in K_{0}$ such that $\sigma=\phi_{0}(\tau_{1})\dot{\cup} \cdots
\dot{\cup} \phi_{0}(\tau_{m})$. We first show that $g(\tau_{1})\dot{\cup} \cdots \dot{\cup} g(\tau_{m})\subset
\sigma$. Let $\tau_{m+1},\ldots,\tau_{l}\in K_0$ be such that $\partial \sigma =\phi_{0}(\tau_{m+1}) \dot{\cup} \cdots
\dot{\cup} \phi_{0}(\tau_{l})$. So $\overline{\sigma}=\phi_{0}(\tau_{1})\dot{\cup} \cdots \dot{\cup}
\phi_{0}(\tau_{m}) \dot{\cup} \phi_{0}(\tau_{m+1}) \dot{\cup}
\cdots \dot{\cup} \phi_{0}(\tau_{l})$. As $dim(\partial
\sigma)<n+1$, by induction hypothesis $\partial
\sigma=g(\tau_{m+1}) \dot{\cup} \cdots \dot{\cup} g(\tau_{l})$. Assume there exists $\tau_{j}$, $j\leq m$, such that
$g(\tau_{j})\cap \partial \sigma \neq \emptyset$. Then $g(\tau_{j})=g(\tau_{i})$, for some $i\geq m+1$.
In particular, as $g$ is a homeomorphism, we have that
$\tau_{i}=\tau_{j}$, which is a contradiction because
$\phi_{0}(\tau_{i})\cap \phi_{0}(\tau_{j})=\emptyset$.

We now show that $g(\tau_{1})\dot{\cup} \cdots
\dot{\cup} g(\tau_{m})=\sigma$. Suppose there exists $x\in
\sigma$ such that $x\notin g(\tau_{1})\dot{\cup} \cdots \dot{\cup}
g(\tau_{m})$. Following the notation of the previous paragraph, since $\partial \sigma=g(\tau_{m+1}) \dot{\cup} 
\cdots \dot{\cup} g(\tau_{l})$ we have that $(g\circ \phi_0^{-1})|_{\partial \sigma}:\partial \sigma \rightarrow \partial \sigma$ is a definable homeomorphism. Next, we consider the definable function $\varphi:= (g|_{\phi_0^{-1}(\overline \sigma)})\circ (\phi_{0}^{-1}|_{\overline \sigma})$,
$$\overline \sigma \overset{\phi_{0}^{-1}}{\longrightarrow} \phi_0^{-1}(\overline \sigma) \overset{g}{\longrightarrow} g(\phi_0^{-1}(\overline \sigma))\subset \overline{\sigma} \setminus \{x\}.$$
By Lemma \ref{lema:contracsimpli} there exists a semialgebraic function
$$h:\overline{\sigma} \setminus \{x\}\rightarrow \partial \sigma$$
such that $h|_{\partial \sigma}=id$. Therefore, as $g(\phi_0^{-1}(\overline \sigma))\subset \overline{\sigma} \setminus \{x\}$, the definable function
$$h \circ \varphi: \overline \sigma \rightarrow
\partial \sigma$$
is well-defined. Observe also that $h \circ \varphi \circ
i:\partial \sigma \rightarrow \partial \sigma$ is a definable homeomorphism, where $i:\partial \sigma \rightarrow \overline{\sigma}$ denotes the inclusion. Now we use the o-minimal homology theory (see section 3 in \cite{MR1883182}). Actually we are going to use the o-minimal reduced homology just to avoid considering separately the case $n=0$ (see section 1.7 of \cite{MR755006} for a discussion of the reduced homology theory). We will denote by $\tilde{H}_{*}(-)^{\mathcal{R}}$ the o-minimal reduced homology group. First observe that $\tilde{H}_{n}(\overline{\sigma})^{\mathcal{R}}=0$. Indeed, $\sigma$ is definably homeomorphic to a simplex whose vertices lies in $\mathbb{Q}$ so we can transfer the corresponding classical result (see Theorem 1.8.3 in \cite{MR755006}) by Proposition 3.2 in \cite{MR1883182}. Hence the induced homomorphism $\varphi_*:\tilde{H}_{n}(\overline{\sigma})^{\mathcal{R}}\rightarrow \tilde{H}_n(\overline{\sigma} \setminus \{x\})$ is identically zero. Since $\varphi_*=0$, we have that 
$$(h \circ \varphi \circ i)_{*}=h_{*} \circ \varphi_{*}
\circ i_{*}: \tilde{H}_{n}(\partial
\sigma)^{\mathcal{R}} \rightarrow \tilde{H}_{n}(\partial
\sigma)^{\mathcal{R}} $$
is also identically zero.

On the other hand, we observe that $\tilde{H}_{n}(\partial \sigma)^{\mathcal{R}}\neq 0$ because again we can transfer the corresponding classical result (see Theorem 1.8.3 in \cite{MR755006}) by Proposition 3.2 in \cite{MR1883182}. Since $h \circ \varphi \circ i:\partial \sigma \rightarrow \partial \sigma$ is a definable homeomorphism then the induced homomorphism $(h \circ \varphi \circ
i)_{*}:\tilde{H}_{n}(\partial
\sigma)^{\mathcal{R}} \rightarrow \tilde{H}_{n}(\partial
\sigma)^{\mathcal{R}}$ is an isomorphism, which is a contradiction.
\end{proof}
%%%%%%%%%%%%%%%%%%%%%%%%%%%%%%%%%%%%%
           %%%%%%%%%% 
           %
           %%%%%%%%%%
                    %
           %%%%%%%%%%
%%%%%%%%%%%%%%%%%%%%%%%%%%%%%%%%%%%%%
\section{Properties of NT-Triangulations}\label{section:5}The next two propositions show the usefulness of condition $iii)$ in the definition of an NT-triangulation.

\begin{prop}\label{obs:tntriang}Let $K$ be a closed simplicial complex in $R^{m}$ and $S_{1},\ldots,S_{k}$ definable subsets of $|K|$. Let $(K',\phi')\in \Delta^{NT}(|K|;S_1,\ldots,S_k)$. Then given $\sigma\in K$ and $\tau_{1},\ldots,\tau_{l}\in K'$ with $\sigma=\phi'(\tau_{1})\dot{\cup} \cdots \dot{\cup}
\phi'(\tau_{l})$ we have that $\sigma=\tau_{1}\dot{\cup} \cdots
\dot{\cup} \tau_{l}$. In particular, for every subcomplex
 $L$ of $K$ we have that $\phi'(|L|)=|L|$.
\end{prop}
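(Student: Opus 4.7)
The plan is to exploit the three defining conditions of an NT-triangulation together with the pairwise disjointness of the open simplices of $K$. Fix $\sigma\in K$ and $\tau_{1},\ldots,\tau_{l}\in K'$ with $\sigma=\phi'(\tau_{1})\,\dot{\cup}\,\cdots\,\dot{\cup}\,\phi'(\tau_{l})$ as in the statement.

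The first step is to show that each $\tau_{j}$ is itself contained in $\sigma$. By condition $ii)$ of Definition \ref{def:normal} the complex $K'$ is a subdivision of $K$, so each $\tau_{j}\in K'$ is contained in some (unique) $\sigma_{j}\in K$. Applying $iii)$ to the inclusion $\tau_{j}\subset \sigma_{j}$ yields $\phi'(\tau_{j})\subset \sigma_{j}$. But by hypothesis also $\phi'(\tau_{j})\subset \sigma$, and since $\phi'(\tau_{j})\neq \emptyset$ and distinct open simplices of $K$ are pairwise disjoint, we must have $\sigma_{j}=\sigma$; in particular $\tau_{j}\subset \sigma$.

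Next I would introduce $\mathcal{T}':=\{\tau\in K':\tau\subset \sigma\}$, which by $ii)$ partitions $\sigma$. The previous paragraph shows $\{\tau_{1},\ldots,\tau_{l}\}\subset \mathcal{T}'$. Conversely, any $\tau\in \mathcal{T}'$ satisfies $\phi'(\tau)\subset \sigma$ by $iii)$, so by $i)$ (together with the fact that distinct simplices of $\phi'(K')$ are disjoint) $\phi'(\tau)$ must coincide with one of the $\phi'(\tau_{j})$; injectivity of $\phi'$ then forces $\tau=\tau_{j}$. Thus $\{\tau_{1},\ldots,\tau_{l}\}=\mathcal{T}'$, and the desired partition $\sigma=\tau_{1}\,\dot{\cup}\,\cdots\,\dot{\cup}\,\tau_{l}$ follows.

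For the ``in particular'' statement, applying the first part to each $\sigma\in L$ gives $\phi'(\sigma)=\phi'(\tau_{1})\,\dot{\cup}\,\cdots\,\dot{\cup}\,\phi'(\tau_{l})=\sigma$ as a set; taking the disjoint union over the simplices of $L$ yields $\phi'(|L|)=|L|$. I do not expect a real obstacle in this proof: once $i)$, $ii)$, $iii)$ are lined up, the pairwise disjointness of the open simplices of $K$ does all the work and forces $\phi'$ to preserve every simplex of $K$ setwise.
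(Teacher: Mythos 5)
Your proof is correct and takes essentially the same route as the paper: conditions $ii)$ and $iii)$ force each $\tau_{j}$ into $\sigma$, and then one checks that no simplex of $K'$ contained in $\sigma$ is missing. The only cosmetic difference is that the paper phrases this last step contrapositively (a leftover $\tau_{l+1}\subset\sigma$ would have image in $\sigma$ disjoint from all $\phi'(\tau_{i})$, a contradiction), whereas you identify $\{\tau_{1},\ldots,\tau_{l}\}$ directly with the set of simplices of $K'$ contained in $\sigma$ — the same mechanism.
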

\begin{proof}Let $\sigma \in K$ and
$\tau_{1},\ldots,\tau_{l}\in K'$ with
$\sigma=\phi'(\tau_{1})\dot{\cup} \cdots \dot{\cup}
\phi'(\tau_{l})$. By $ii)$ and $iii)$ of the definition of NT-triangulations
we have that $\tau_{1}\dot{\cup} \cdots
\dot{\cup} \tau_{l}\subset \sigma$.
%Efectivamente, si $\tau_{i}$
%no est� contenido en $\sigma$, por la propiedad $ii)$ estar�
%contenido en un s�mplice $\sigma'$ de $K$ distinto de $\sigma$ y
%por tanto, por la propiedad $iii)$, tendremos que
%$\phi'(\tau_{i})$ tambi�n estar� contenido en $\sigma'$, lo cual
%es una contradicci�n. 
If $\tau_{1}\dot{\cup} \cdots
\dot{\cup} \tau_{l}\subsetneq \sigma$ then
by $ii)$ of NT there exists $\tau_{l+1}\in
K'$ with $\tau_{l+1}\subset \sigma$ and disjoint from $\tau_{i}$, $i=1,\ldots,l$.
Hence $\phi'(\tau_{l+1})\subset \sigma$ is disjoint from
$\phi'(\tau_{i})$, which is a contradiction. Therefore
$\sigma=\tau_{1}\dot{\cup} \cdots \dot{\cup} \tau_{l}$.
\end{proof}
\begin{prop}\label{lema:TNhomotopico}Let $K$ be a closed simplicial complex and $S_{1},\ldots,S_{k}$ definable subsets of $|K|$. Let $(K',\phi')\in \Delta^{NT}(S;S_1,\ldots,S_k)$. Then the definable homeomorphism $\phi':|K'|\rightarrow |K|$ is definably homotopic to the identity.
\end{prop}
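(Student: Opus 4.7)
The plan is to use the straight-line homotopy and rely on condition $iii)$ of the NT-triangulation to ensure the homotopy stays inside $|K|$. Explicitly, define
\[
H:|K'|\times [0,1]\longrightarrow R^{m},\qquad H(x,t)=(1-t)x+t\,\phi'(x).
\]
This map is definable (since $\phi'$ is definable), continuous as a map into $R^{m}$, satisfies $H(\cdot,0)=id_{|K|}$ and $H(\cdot,1)=\phi'$. So the only thing that really needs to be checked is that $H$ takes values in $|K|$.

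To see this, I would fix $x\in |K|=|K'|$ and let $\tau\in K'$ be the unique open simplex of $K'$ containing $x$. By condition $ii)$ of the definition of NT-triangulation, $(K',id)\in\Delta(|K|;\sigma)_{\sigma\in K}$, so $K'$ is a subdivision of $K$ and there exists a unique $\sigma\in K$ with $\tau\subset\sigma$. By condition $iii)$, $\phi'(\tau)\subset\sigma$; in particular both $x$ and $\phi'(x)$ lie in the open simplex $\sigma$. Since an open simplex is convex, the whole segment $\{(1-t)x+t\phi'(x):t\in[0,1]\}$ is contained in $\sigma\subset|K|$. Hence $H(x,t)\in|K|$ for every $t\in[0,1]$, so $H$ is indeed a well-defined continuous definable homotopy $|K'|\times[0,1]\to|K|$ from $id_{|K|}$ to $\phi'$.

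There is no serious obstacle here: the whole content of the statement is bundled into condition $iii)$ of the NT-triangulation, which was precisely designed to make the straight-line homotopy legitimate. The preceding examples in section \ref{section:2} (where $iii)$ fails even though $i)$ and $ii)$ hold, e.g.\ the reflection example) show why this condition cannot be dropped: without $iii)$, $\phi'$ can swap simplices of $K$ and then no straight-line homotopy stays in $|K|$.
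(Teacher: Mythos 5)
Your proof is correct and is essentially the same as the paper's: the paper also takes the straight-line homotopy $H(x,s)=(1-s)x+s\phi'(x)$ and justifies that it stays in $|K|$ via condition $iii)$ of the NT-triangulation (with the convexity of the open simplex containing $x$ and $\phi'(x)$ implicit). You merely spell out the use of condition $ii)$ and convexity more explicitly, which is fine.
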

\begin{proof}Consider the following definable map
$$\begin{array}{rcl}
H:|K'|\times I & \rightarrow &  |K'| \\
(x,s) & \rightarrow & (1-s)x+s\phi'(x).
\end{array}$$
The map $H$ is well-defined because, by $iii)$ of
NT of $(K',\phi')$, given $x\in \sigma \in K$ we have that
$\phi'(x)\in \sigma$. Observe also that $H$ is clearly continuous. Therefore $H$ is a definable homotopy between $\phi'$ and $id$ 
because
\begin{displaymath}\left\{
\begin{array}{l}
H(x,0)=x,  \ \ x\in |K'|,\\
H(x,1)=\phi'(x), \ \ x\in |K'|.\\
\end{array}\right.
\end{displaymath}
\end{proof}
\begin{cor}\label{cor:principal}Let $K$ be a closed simplicial complex and $S_1,\ldots,S_k$ definable subsets of $|K|$. Then there exists $(K',\phi')\in \Delta(|K|;S_1,\ldots,S_k,\sigma)_{\sigma \in K}$ such that $K'$ is a subdivision of $K$ and $\phi'$ is definably homotopic to $id$.
\end{cor}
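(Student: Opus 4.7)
The plan is to obtain this corollary as an essentially immediate consequence of the two principal results already available in the excerpt: the NT-Triangulation Theorem (Theorem \ref{teo:NTteorema}) and Proposition \ref{lema:TNhomotopico}. So there will be no substantive new argument to carry out; the task is simply to unpack the definition of $\Delta^{NT}$ and verify that the triangulation it supplies satisfies each of the properties claimed in the corollary.

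First I would apply Theorem \ref{teo:NTteorema} to the closed simplicial complex $K$ and the definable subsets $S_1,\ldots,S_k$ of $|K|$, obtaining a triangulation $(K',\phi')\in \Delta^{NT}(|K|;S_1,\ldots,S_k)$. I would then read off what this membership means by going back to Definition \ref{def:normal}. Condition i) of that definition says precisely that $(K',\phi')\in \Delta(|K|;S_1,\ldots,S_k,\sigma)_{\sigma\in K}$, which is the compatibility requirement of the corollary. Condition ii) says that $(K',id)\in \Delta(|K|;\sigma)_{\sigma\in K}$, so in particular $|K'|=|K|$ and every simplex $\sigma\in K$ is a disjoint union of simplices of $K'$; that is exactly what it means for $K'$ to be a subdivision of $K$.

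Finally, for the homotopy statement, I would invoke Proposition \ref{lema:TNhomotopico} directly: since $(K',\phi')$ is an NT-triangulation, the proposition provides a definable homotopy between $\phi'$ and $id_{|K|}$ (the explicit straight-line homotopy $H(x,s)=(1-s)x+s\phi'(x)$, which is well defined because condition iii) of Definition \ref{def:normal} keeps each point inside its ambient simplex of $K$). Combining these three observations completes the proof.

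There is no real obstacle here; the work has already been done in Sections \ref{section:3} and \ref{section:4}. The only thing to be careful about is not to confuse the three clauses of Definition \ref{def:normal}: clause i) yields the compatibility, clause ii) yields that $K'$ subdivides $K$, and clause iii) is what makes the straight-line homotopy of Proposition \ref{lema:TNhomotopico} land inside $|K|$. Once those three points are cleanly separated, the proof is a two-line citation.
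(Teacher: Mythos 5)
Your proposal is correct and is exactly the paper's own argument: the corollary is deduced by applying the NT-Triangulation Theorem (Theorem \ref{teo:NTteorema}) and then Proposition \ref{lema:TNhomotopico}, with the compatibility and subdivision properties read off from Definition \ref{def:normal}. The extra unpacking of the three clauses is fine but adds nothing beyond what the paper's two-line citation already contains.
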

\begin{proof}By Theorem \ref{teo:NTteorema} and Proposition \ref{lema:TNhomotopico}. 
\end{proof}

The next proposition is an extension property for NT-triangulations. Its proof is an adaptation of Lemma II.4.3 in \cite{MR819737} to the case of NT-triangulations.
\begin{prop}Let $K$ be a closed complex and $K_Z$ a closed subcomplex of $K$. Let $(K_0,\phi_0)\in \Delta^{NT}(|K_Z|)$. Then there exists $(K',\phi')\in \Delta^{NT}(|K|)$ such that $K_0\subset K'$ and $\phi'|_{|K_0|}=\phi_0$.
\end{prop}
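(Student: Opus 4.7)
The plan is to extend $(K_0,\phi_0)$ to $|K|$ one simplex at a time, by induction on the simplices of $K\setminus K_Z$ ordered by increasing dimension, using a cone construction from the barycenter. Enumerate $K\setminus K_Z=\{\sigma_1,\ldots,\sigma_r\}$ with $\dim\sigma_i\le\dim\sigma_{i+1}$, and set $K_i:=K_Z\cup\{\sigma_1,\ldots,\sigma_i\}$; the dimension ordering ensures that $K_i$ is a closed subcomplex of $K$, because all proper faces of $\sigma_{i+1}$ lie in $K_i$. I will construct $(K_i',\phi_i')\in\Delta^{NT}(|K_i|)$ with $K_0\subseteq K_i'$ and $\phi_i'|_{|K_0|}=\phi_0$ by induction on $i$, starting from $(K_0',\phi_0')=(K_0,\phi_0)$, and take $(K',\phi'):=(K_r',\phi_r')$ at the end.

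For the inductive step, write $\sigma=\sigma_{i+1}$ and let $b=b_\sigma$ denote the barycenter of $\sigma$. The boundary $\partial\sigma$ lies in $|K_i|$ and is triangulated by the subcomplex $L_\sigma:=\{\lambda\in K_i':\lambda\subset\partial\sigma\}$, while condition (iii) of Definition \ref{def:normal} applied inductively ensures that $\phi_i'|_{\partial\sigma}:\partial\sigma\to\partial\sigma$ is a definable homeomorphism that preserves each proper closed face of $\sigma$. Every $\lambda\in L_\sigma$ is contained, by connectedness, in a single open face $F$ of $\sigma$; since $b$ has all positive barycentric coordinates in $\sigma$, $b\notin\mathrm{aff}(F)\supseteq\mathrm{aff}(\lambda)$, so $b*\lambda$ is a genuine simplex of dimension $\dim\lambda+1$. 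Define $K_{i+1}':=K_i'\cup\{b\}\cup\{b*\lambda:\lambda\in L_\sigma\}$ and extend $\phi_i'$ to $\phi_{i+1}'$ by declaring $\phi_{i+1}'(b)=b$ and using the radial formula
$$\phi_{i+1}'(tb+(1-t)y)=tb+(1-t)\phi_i'(y),\qquad y\in\partial\sigma,\ t\in[0,1],$$
which agrees with $\phi_i'$ on $\partial\sigma$ (at $t=0$) and extends it into $\overline\sigma$.

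The main obstacles are verifying that $K_{i+1}'$ is a closed simplicial complex and that $\phi_{i+1}'$ is a definable homeomorphism satisfying the NT conditions. For the first, face closure is immediate, and disjointness between two new cones $b*\lambda_1$ and $b*\lambda_2$ with $\lambda_1\neq\lambda_2$ follows from the convexity of $\overline\sigma$: if their open interiors intersected, some line through $b$ would meet $\partial\sigma$ in two distinct points lying on the same side of $b$, contradicting $b\in\sigma$; Lemma \ref{lema:complejo} then applies. For the second, every point of $\overline\sigma\setminus\{b\}$ has a unique representation $tb+(1-t)y$ with $y\in\partial\sigma$ and $t\in[0,1)$, so the radial formula is well-defined and continuous; it is a continuous definable bijection of $\overline\sigma$ fixing $b$, upgraded to a homeomorphism by Corollary 6.1.12 of \cite{MR1633348}. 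Condition (iii) of NT-triangulation holds because the formula sends any strict convex combination $\alpha b+(1-\alpha)y\in\sigma$ (with $\alpha>0$, $y\in\partial\sigma$) to $\alpha b+(1-\alpha)\phi_i'(y)\in\sigma$, again by convexity of $\overline\sigma$; condition (ii) is immediate since new simplices lie in $\overline\sigma$, and (i) is then automatic. The final pair $(K',\phi')$ satisfies $K_0\subseteq K'$ and $\phi'|_{|K_0|}=\phi_0$ by construction.
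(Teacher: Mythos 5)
Your proof is correct and follows essentially the same route as the paper's: an induction over the simplices of $K\setminus K_Z$ in order of dimension (the paper adds a whole skeleton $SK^{m}$ at a time rather than one simplex), coning from the barycenter over the induced triangulation of $\partial\sigma$ with the same radial formula $(1-t)u+t\hat{\sigma}\mapsto(1-t)\phi(u)+t\hat{\sigma}$, and the same use of condition (iii) to see that the extension stays inside $\overline{\sigma}$. The only small slip is the justification that each $\lambda\in L_\sigma$ lies in a single open face of $\sigma$: connectedness alone does not give this (a connected set can cross faces), but it follows immediately from condition (ii) of the NT property of $(K_i',\phi_i')$, i.e.\ from compatibility of the subdivision with the simplices of $K_i$.
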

\begin{proof}We first observe that it follows from Proposition \ref{obs:tntriang} that $|K_{0}|=|K_{Z}|$. For every $m\geq
0$ we denote by $SK^{m}$ the closed complex which is the union of $K_{Z}$ and all the simplices of $K$ of dimension $\leq m$. Let us show that there exists $(K^{m},\phi^{m})\in \Delta^{NT}(|SK^{m}|)$ such that
$K_{0} \subset K^{m}$ and
\begin{center}
$\phi^{m}|_{|K_{0}|}=\phi_{0}.$
\end{center}
Hence for $m=dim(K)$ we will obtain the NT-triangulation
$(K',\phi')$ as required.

For $m=0$ let $K^{0}$ be the union of $K_{0}$ and all vertices of $K$. Let $\phi^{0}$ be equal to $\phi_{0}$ on
$|K_{0}|$ and the identity on the vertices of $K$ that does not lie in $|K_{0}|$. Clearly $(K^{0},\phi^{0})\in \Delta^{NT}(|SK^0|)$, $K_{0}\subset K^{0}$ and $\phi^{0}|_{|K_{0}|}=\phi_{0}$.

Suppose we have constructed $(K^{m},\phi^{m})$. Let us construct $(K^{m+1},\phi^{m+1})$. Let $\Sigma_{m+1}$ be the collection of simplices
in $K-K_{0}$ of dimension $m+1$. For every
$\sigma\in \Sigma_{m+1}$ we have that $\partial \sigma
=\bar{\sigma}-\sigma$ is contained in $SK^{m}$. By $ii)$ of NT of $(K^{m},\phi^{m})$ there exists a finite collection of 
indices $J_{\sigma}$ and simplices $\tau^{\sigma}_j$ of
$K^{m}$, $j \in J_{\sigma}$, such that $\partial \sigma=
\dot{\bigcup}_{j\in J_{\sigma}}\tau^{\sigma}_j$. Consider the collection of simplices $\tau^{\sigma}_j$ of $K^{m}$ for each
$\sigma\in \Sigma_{m+1}$ and $j\in J_{\sigma}$. Define
for each $\sigma\in \Sigma_{m+1}$, $j\in J_{\sigma}$,
$$\begin{array}{crcl}
h^{\sigma}_j:& [\tau^{\sigma}_j,\hat{\sigma}]& \rightarrow & \overline{\sigma} \\
 & (1-t)u+t\hat{\sigma}& \rightarrow & (1-t)\phi^{m}(u)+t\hat{\sigma}
\end{array}$$
where $\hat{\sigma}$ denotes the barycenter of $\sigma$ and
$[\tau^{\sigma}_j,\hat{\sigma}]$ the cone over $\tau^{\sigma}_j$ with
vertex $\hat{\sigma}$, that is,
$$[\tau^{\sigma}_j,\hat{\sigma}]=\{(1-t)u+t\hat{\sigma}:u\in\tau_{j\sigma},t\in [0,1] \}.$$
Observe that $h^{\sigma}_j$ is well-define because given $u\in
\tau^{\sigma}_j$ there exists a proper face $\sigma_{0}\in K$ of $\sigma$ such that
$\tau^{\sigma}_j\subset \sigma_{0}$ and therefore, by $iii)$ of NT of $(K^{m},\phi^{m})$, we have that $\phi^{m}(u)\in \phi^{m}(\tau^{\sigma}_j)\subset \sigma_{0} \subset
\partial \sigma$. Hence $h^{\sigma}_j((1-t)u+t\hat{\sigma})\in \overline{\sigma}$ for all $t\in [0,1]$. We also observe that the map $h^{\sigma}_j$ is injective. We now check that $h^{\sigma}_j$ is indeed continuous, which follows from Corollary 6.1.13 ii) in \cite{MR1633348}. Finally let $K^{m+1}$ be the collection of simplices
$K^{m}$ and the collection of simplices
$$(\tau^{\sigma}_j,\hat{\sigma})=\{(1-t)u+t\hat{\sigma}:u\in\tau^{\sigma}_j,t\in (0,1)\}$$
and their faces. The triangulation $(K^{m+1},\phi^{m+1})$,
where $\phi^{m+1}|_{[\tau^{\sigma}_j,\hat{\sigma}]}=h^{\sigma}_j$,
is such that $K_{0}\subset K^{m+1}$ and
$\phi^{m+1}|_{|K_{0}|}=\phi_{0}$. We show now that
$(K^{m+1},\phi^{m+1})\in \Delta^{NT}(|SK^{m+1}|)$.
First observe that not only $\phi^{m+1}|_{|K_{0}|}=\phi_{0}$, but $\phi^{m+1}|_{|K_{m}|}=\phi^{m}$. Recall also that $(K^{m},\phi^{m})$ is NT. Let us check the three properties  of NT. To prove $i)$ of NT we observe that the image by $h^{\sigma}_j$ of every $(\tau^{\sigma}_j,\hat{\sigma})$ is
$$h^{\sigma}_j((\tau^{\sigma}_j,\hat{\sigma}))=(\phi^{m}(\tau^{\sigma}_j),\hat{\sigma}),$$
where
$(\phi^{m}(\tau^{\sigma}_j),\hat{\sigma})=\{(1-t)x+t\hat{\sigma}:x\in
\phi^{m}(\tau^{\sigma}_j), t\in (0,1)\}$. The sets
$\phi^{m}(\tau^{\sigma}_j)$ are pairwise disjoint and their union equals $\partial \sigma$. Therefore 
$$\tau^{\sigma}_i\neq \tau^{\sigma}_j\Rightarrow \tau^{\sigma}_i\cap \tau^{\sigma}_j=\emptyset \Rightarrow \phi^m(\tau^{\sigma}_i)\cap \phi^m( \tau^{\sigma}_j)=\emptyset \Rightarrow$$
$$\Rightarrow (\phi^{m}(\tau^{\sigma}_i),\hat{\sigma})\cap
(\phi^{m}(\tau^{\sigma}_j),\hat{\sigma})=\emptyset.$$
and the union of the
$(\phi^{m}(\tau^{\sigma}_j),\hat{\sigma})$ equals
$\overline{\sigma}$ without the point $\hat{\sigma}$. Condition $ii)$ of NT is clear because the cones
$[\tau^{\sigma}_j, \hat{\sigma}]$ and their faces form a triangulation
 of $\overline{\sigma}$. Clearly $iii)$ of NT holds since we have always worked inside each simplex $\sigma \in
\Sigma_{m+1}$.
\end{proof}
%\bibliographystyle{plain}
%\bibliography{bibliografia}

E-mail address: elias.baro@uam.es.
\end{document}